\newtheorem{theorem}{Theorem}
\newtheorem{proposition}[theorem]{Proposition}%
\newtheorem{lemma}{Lemma}
\newtheorem{remark}{Remark}%
\DeclareMathOperator*{\argmax}{arg\,max}
\newcommand{\sign}{\textrm{sign}}
\newcommand{\tr}{\textrm{tr}}
\newcommand{\inprod}[2]{\langle #1,#2\rangle}
\pgfplotsset{compat=1.18}
\begin{document}

\title{Optimization meets Machine Learning: An Exact Algorithm for Semi-Supervised Support Vector Machines}


\author*[1]{\fnm{Veronica} \sur{Piccialli}}\email{veronica.piccialli@uniroma1.it}

\author[2]{\fnm{Jan} \sur{Schwiddessen}}\email{jan.schwiddessen@aau.at}

\author[1]{\fnm{Antonio M.} \sur{Sudoso}}\email{antoniomaria.sudoso@uniroma1.it}

\affil*[1]{\orgdiv{Department of Computer, Control and Management Engineering}, \orgname{Sapienza University of Rome}, \orgaddress{\street{Via Ariosto 25}, \city{Rome}, \postcode{00185}, \country{Italy}}}

\affil[2]{\orgdiv{Institut für Mathematik}, \orgname{Alpen-Adria-Universität Klagenfurt}, \orgaddress{\street{Universitätstraße 65-67}, \city{Klagenfurt}, \postcode{9020}, \country{Austria}}}


\abstract{Support vector machines (SVMs) are well-studied supervised learning models for binary classification. Large amounts of samples can be cheaply and easily obtained in many applications. What is often a costly and error-prone process is to label these data points manually. Semi-supervised support vector machines (S3VMs) extend the well-known SVM classifiers to the semi-supervised approach, aiming to maximize the margin between samples in the presence of unlabeled data. By leveraging both labeled and unlabeled data, S3VMs attempt to achieve better accuracy and robustness than traditional SVMs. Unfortunately, the resulting optimization problem is non-convex and hence difficult to solve exactly. This paper presents a new branch-and-cut approach for S3VMs using semidefinite programming (SDP) relaxations. We apply optimality-based bound tightening to bound the feasible set. Box constraints allow us to include valid inequalities, strengthening the lower bound. The resulting SDP relaxation provides bounds that are significantly stronger than the ones available in the literature. For the upper bound, instead, we define a local search heuristic exploiting the solution of the SDP relaxation. Computational results highlight the algorithm's efficiency, showing its capability to solve instances with ten times more data points than the ones solved in the literature.}

\keywords{Global Optimization, SVM, Branch-and-Cut, Semidefinite Programming}


\pacs[MSC Classification]{90C26,90C22,62H30}

\maketitle

\section{Introduction}

Support Vector Machines (SVMs) are a powerful class of supervised machine learning algorithms used for classification and regression tasks~\citep{cortes1995support}. SVMs are particularly effective in handling high-dimensional data and can be applied to both linearly separable and non-linearly separable datasets. The main idea behind SVMs is to find an optimal hyperplane that maximally separates the data points belonging to different classes in a feature space. This hyperplane is selected so that the margin, which is the distance between the hyperplane and the nearest data points from each class (called support vectors), is maximized. By maximizing the margin, SVMs aim to achieve better generalization and improve the ability to classify unseen data accurately. The SVM training problem can be equivalently formulated as a quadratic convex problem with linear constraints or, by Wolfe's duality theory, as a quadratic convex problem with one linear constraint and box constraints. Depending on the formulation, several optimization algorithms have been designed specifically for SVM training. We refer the reader to the survey~\cite{piccialli2018nonlinear} for an overview of the essential optimization methods for training SVMs.

Semi-supervised Support Vector Machines (S3VMs) are an extension of the traditional (supervised) SVMs that incorporate both labeled and unlabeled data during the training process~\citep{bennett1998semi}. Obtaining labeled data can be costly and time-consuming in many real-world applications, whereas unlabeled data are relatively easy to obtain in large quantities. When only a small percentage of the collected samples is labeled, learning a classification function is very likely to produce an inconsistent model, which would not be reliable for classifying future data. The motivation behind S3VMs is to exploit the unlabeled data to learn a decision boundary that is more robust and generalizable. In fact, by maximizing the margin in the presence of unlabeled data, the decision boundary produced by S3VMs traverses through low data-density regions while adhering to the labeled data points in the input space. In other words, this approach applies the cluster assumption to semi-supervised learning, which states that data points belonging to the same cluster have the same labels~\citep{chapelle2005semi, chapelle2009semi}. 

The design of SVMs with partially labeled datasets has been a highly active area of research. Much of this work revolves around a key concept: solving the conventional SVM problem while considering the unknown labels as extra optimization variables. However, the improved performance of S3VMs is obtained at the sacrifice of computational complexity. While the underlying optimization problem of SVMs is convex, this is not the case for S3VMs, where the problem becomes non-convex and has many low-quality solutions~\cite{chapelle2008optimization}. Since its introduction in~\cite{bennett1998semi}, a wide range of techniques has been employed to find good-quality solutions to the non-convex optimization problem associated with S3VMs~\citep{ding2017overview}. These techniques include combinatorial search~\citep{joachims1999transductive, burgard2023mixed}, gradient descent~\citep{chapelle2005semi}, continuation techniques~\citep{chapelle2006continuation}, convex-concave procedures~\citep{fung2001semi, collobert2006large}, semidefinite programming~\citep{de2006semi, bai2016conic}, nonsmooth optimization~\citep{astorino2007nonsmooth}, and deterministic annealing~\citep{sindhwani2006deterministic}. See the work by Chapelle et al.~\cite{chapelle2008optimization} and references therein for an overview of optimization procedures for S3VMs.


Many studies have demonstrated that S3VM implementations exhibit varying levels of empirical success. As shown in~\cite{chapelle2006branch}, this discrepancy is closely linked to their vulnerability to issues related to local optimizers. Empirical results on some semi-supervised tasks presented in~\cite{chapelle2006branch} show that the exact solution found by the branch-and-bound method has excellent generalization performance. In contrast, other S3VM implementations based on local optimization methods perform poorly. Local algorithms are designed to find solutions within a local region, and they can get trapped in low-quality local optima when dealing with non-convex problems. In the case of S3VMs, this can lead to a suboptimal hyperplane that does not generalize well to unseen data. 
Similar to most semi-supervised learning methods, there is no guarantee that S3VMs will outperform their supervised counterparts. This uncertainty stems from two potential scenarios: (a) the cluster assumption may not hold, and (b) the cluster assumption holds, but the presence of several local minima is problematic.

Chapelle et al.~\cite{chapelle2008optimization} conducted an empirical investigation comparing different S3VM implementations with a global optimizer. Their findings suggested that performance degradation is primarily due to suboptimal local minima, establishing a clear correlation between generalization performance and the S3VM objective function. Consequently, when the cluster assumption holds and hyperparameters are appropriately chosen, minimizing the S3VM objective is expected to be a suitable approach.
However, due to the non-convex nature of the problem, minimizing the S3VM objective is not a straightforward task, requiring the application of diverse optimization techniques, each with varying degrees of success. Moreover, the study by Chapelle et al.~\cite{chapelle2008optimization} revealed that no single heuristic method consistently outperforms others regarding generalization performance. This underscores the ongoing need for improved global optimization methods to optimize the S3VM objective function. Our paper addresses this gap by proposing an exact algorithm based on the branch-and-cut technique to achieve globally optimal solutions for S3VMs.

\section{Related Work}
This section reviews the exact approaches proposed in the literature to tackle S3VMs. Due to the complexity of the associated optimization problem, the computational time required for solving S3VM instances to global optimality quickly increases as the number of data points grows. This is why applications of S3VMs have been restricted to small-size problems. Most exact approaches in the literature are based on the branch-and-bound technique. A first branch-and-bound algorithm is proposed by Bennett and Demiriz~\cite{bennett1998semi}. The authors show that the L1-norm linear S3VM model can be reformulated as a Mixed-Integer Program (MIP) and solved exactly using integer programming tools. Instead, Chapelle et al.~\cite{chapelle2006branch} propose a branch-and-bound algorithm for L2-norm S3VMs. This algorithm performs a search over all possible labels and prunes large parts of the solution space based on lower bounds on the optimal S3VM objective. For the lower bound, they use the objective value of a standard SVM trained on the associated labeled set.
Next, they produce a binary enumeration tree where nodes are associated with a fixed partial labeling of the unlabeled dataset, and the two children correspond to the labeling of some new unlabeled data point. The root corresponds to the initial set of labeled examples, and the leaves correspond to a complete labeling of the data. Moving down in the tree, a labeling-confidence criterion is used to choose an unlabeled example on which to branch. The overall algorithm is able to return a globally optimal solution in a reasonable amount of time only for small-scale instances (less than 50 data points). Other approaches for solving S3VMs have been put forward. For instance, several studies have proposed convex relaxations of the objective function, which can be solved using Quadratic Programming (QP) and Semidefinite Programming (SDP) tools~\cite{bai2012sdp,bai2013new,bai2016conic}. {A semidefinite programming problem involves optimizing a linear objective function over the intersection of the cone of positive semidefinite matrices with an affine subspace. SDPs are convex optimization problems, and all well-posed SDPs can be solved in polynomial time, typically using interior-point methods. SDP relaxations are known to provide strong dual bounds for challenging combinatorial optimization problems. For a comprehensive overview of the theory, algorithms, and applications of semidefinite programming, we recommend referring to~\cite{wolkowicz2012handbook}.}

More recently, Tian and Luo~\citep{tian2017new} have proposed a new branch-and-bound algorithm for the L2-norm S3VM. First, they transform the original problem into a non-convex quadratically constrained quadratic programming (QCQP) problem. Then, to provide computationally efficient lower bounds, they apply Lagrangian relaxation to the QCQP model and dualize the non-convex quadratic constraints. The Lagrange multipliers are estimated by solving an auxiliary SDP and are then propagated downward to the child nodes during every branching step. At its core, the algorithm is a heuristic inspired by the branch-and-bound technique, involving a maximum number of branching decisions (labelings). For small problems, this decision count is fixed at 30, while for larger instances, the algorithm terminates the enumeration process when 50\% of unlabeled points have been labeled. It is worth highlighting that, in the general setting, this algorithm falls short of providing a certificate of global optimality. To the best of our knowledge, this is the most recent method based on the branch-and-bound technique available in the literature for L2-norm S3VMs. 

In this paper, we present a novel exact algorithm based on the branch-and-cut technique. {We enhance an existing SDP relaxation by employing sophisticated techniques from the global optimization literature. Specifically tailored to our problem, these methods yield significantly stronger bounds compared to those reported in existing literature.}
More in detail, the main original contributions of this paper are:
\begin{enumerate}
    \item We propose the first SDP-based branch-and-cut algorithm producing globally optimal solutions for S3VM. We perform bound tightening to derive optimality-based box constraints that allow us to strengthen the basic SDP relaxation available in the literature. We solve the resulting SDP relaxation through a cutting-plane procedure. We define an effective local search heuristic that exploits the solution of the SDP relaxation.
    \item We prove the tightness of the SDP relaxation for an ideal kernel function such that the S3VM objective function aligns with the ground truth.
    \item We perform an extensive set of numerical experiments on both synthetic and real-world instances. Our findings illustrate the algorithm's consistent ability to yield small optimality gaps for large-scale instances (up to 569 data points), a problem size approximately ten times larger than those addressed by existing exact algorithms in the literature.
    \item {We thoroughly analyze our results from both the mathematical optimization and machine learning perspectives, highlighting the advantages of using an exact algorithm.}
    \end{enumerate}


The remainder of this paper is organized as follows. In Section~\ref{sec:s3vm_intro}, we present the S3VM formulation and the convex relaxations available in the literature. Section~\ref{sec:bac} illustrates the ingredients of the proposed branch-and-cut approach, namely the SDP relaxation, the primal heuristic, and the branching strategy. In Section~\ref{sec:ideal}, we prove the tightness of the SDP relaxation under the assumption of the ideal kernel matrix. Section~\ref{sec:results} presents computational results on both synthetic and real-world instances. Finally, Section~\ref{sec:conclusion} concludes the paper with future research directions.

\subsection*{Notation}
Let $\mathbb{S}^n$ be the set of all $n\times n$ real symmetric matrices. We denote by $M\succeq 0$ a positive semidefinite matrix $M$ and by $\mathbb{S}_+^n$ the set of all positive semidefinite matrices of size $n\times n$. Analogously, we denote by $M \succ 0$ a positive definite matrix $M$ and by ${\mathbb S}_{++}^n$ the set of all positive definite matrices of size $n\times n$. We denote the trace inner product by $\inprod{\cdot}{\cdot}$. That is, for any $A, B \in \mathbb{R}^{n\times m}$, we define $\inprod{A}{B}\coloneqq \tr (B^\top A)$. Given a matrix $A \in \mathbb{R}^{n\times m}$, $A_{ij}$ denotes the element of $A$ in the $i$-th row and $j$-th column. For two vectors $a, b \in \mathbb{R}^n$, $a \circ b$ denotes the element-wise product between $a$ and $b$. Finally, we denote by $e_n$ the vector of all ones of size $n$ and by $I_n$ the identity matrix of size $n \times n$. We omit the subscript when the size is clear from the context.

\section{Semi-supervised Support Vector Machines}\label{sec:s3vm_intro}
In this section, we introduce the mathematical programming formulation behind S3VMs. We are given a binary classification dataset consisting of $l$ labeled points $\{(x_i, y_i)\}_{i=1}^l$ and $n-l$ unlabeled points $\{x_i\}_{i=l+1}^n$, where $x_i \in \mathbb{R}^d$ and $y_i \in \{-1, +1\}$ for $i \in \{1, \dots, n\}$. Similar to traditional SVMs, the primary objective of S3VMs is to determine the parameters $(w, b) \in \mathbb{R}^d \times \mathbb{R}$ of a separating hyperplane $w^\top x_i + b = 0$. This hyperplane separates all the data points in the input space into two classes by maximizing the distance between two supporting hyperplanes, namely $w^\top x_i + b = 1$ and $w^\top x_i + b = -1$, while ensuring accurate classification. The objective function to be optimized depends on both the decision boundary parameters $(w, b)$ and the unknown labels $\{y_i\}_{i=l+1}^n$:
\begin{equation}\label{eq:s3vm_loss}
    f(w, b, \{y_i\}_{l+1}^n) = \frac{1}{2} \|w\|^2_2 + C_l \sum_{i=1}^l  H(y_i, o_i) + C_u \sum_{i=l+1}^n H(y_i, o_i),
\end{equation}
where $o_i = w^\top x_i + b$ and $H$ is a loss function. The hyperparameters $C_l$ and $C_u$ are used to give more importance to the labeled and unlabeled error components, respectively. The Hinge loss, defined as
\begin{equation*}
    H(y_i, o_i) = \max(0, 1 - y_i o_i)^p,
\end{equation*}
is the most commonly used loss function and can be penalized either linearly ($p = 1$) or quadratically ($p = 2$). In the remainder
of this paper, we will consider $p = 2$ which gives rise to the L2-norm S3VM formulation. 

It is important to note that excluding the third term from~\eqref{eq:s3vm_loss} simplifies the formulation. From an optimization perspective, reducing the value of $C_u$ makes the unlabeled loss function less non-convex. In particular, when $C_u  = 0$, the objective function~\eqref{eq:s3vm_loss} reflects a conventional SVM training problem. Hence, the hyperparameter $C_u$ allows for the control of the overall objective's non-convexity. Optimizing~\eqref{eq:s3vm_loss} results in a linear decision function, representing a separating hyperplane. Nonlinear boundaries can be achieved through the kernel trick~\cite{cortes1995support, piccialli2018nonlinear}. The recently developed optimization techniques for minimizing the S3VM objective can be broadly categorized into two groups: combinatorial and continuous approaches. Combinatorial approaches treat the labels $\{y_i\}_{i=l+1}^n$ of unlabeled data points as explicit optimization variables. In contrast, continuous approaches do not consider unknown labels as optimization variables but instead minimize modified versions of~\eqref{eq:s3vm_loss} using continuous optimization techniques~\citep{chapelle2008optimization}. Our methodology bridges aspects from both worlds, focusing on a continuous optimization formulation while leveraging its underlying combinatorial nature.

\subsection{Problem formulation}\label{sec:prob_form}
Consider a binary classification problem with $n$ data points where the first $l$ data points are already labeled and the remaining data points $l + 1, \dots, n$ are unlabeled. We assume that all data points are centered around the origin so that the bias term $b$ can be set to zero~\cite{xu2004maximum}. To handle datasets that are not linearly separable, we map all data points into a higher-dimensional space using a mapping function $\phi: \mathbb{R}^d \rightarrow \mathbb{R}^m$ and then separate them using a hyperplane in this expanded space. This gives rise to the kernel-based S3VMs model, which can be formulated as the following MIP:
\begin{equation}\label{eq:S3VMmixed}
\begin{aligned}
\min_{w,\xi, y^u} \quad & \frac{1}{2} \|w\|_2^2 + C_l \sum_{i=1}^l\xi_i^2 + C_u \sum_{i=l+1}^n\xi_i^2 \\
\textrm{s.\,t.} \quad & y_i w^\top \phi(x_i) \geq 1 - \xi_i, \qquad i = 1, \dots, n,  \\
& y^u \coloneqq (y_{l+1},\dots, y_n)^\top \in \{-1,+1\}^{n - l}, \\
\end{aligned}
\end{equation}
where we use the L2-norm penalization for misclassification errors $\xi_i$.
For a given set of labels $y^u$, we consider the convex problem for building the SVM as
\begin{equation}\label{eq:S3VMprimal}
\begin{aligned}
\min_{w,\xi} \quad & \frac{1}{2} \|w\|_2^2 + C_l \sum_{i=1}^l\xi_i^2 + C_u \sum_{i=l+1}^n\xi_i^2 \\
\textrm{s.\,t.} \quad & y_i w^\top \phi(x_i) \geq 1 - \xi_i, \qquad i = 1, \dots, n. \\
\end{aligned}
\end{equation}
Since the mapping $\phi$ cannot be handled explicitly, a kernel function defined by $k(x_i, x_j) \coloneqq \phi(x_i)^\top \phi(x_j)$, which computes the inner product of the input vectors in a higher-dimensional space, is used instead. Let $\alpha \in \mathbb{R}^n$ be the vector of dual multipliers associated with the primal constraints. Moreover, let $\bar{K}$ be the kernel matrix, i.e., a symmetric {and positive semidefinite} matrix whose elements are computed by applying the kernel function to all pairs of input vectors, and let $D$ be a diagonal matrix with $D_{ii} = \frac{1}{2C_l}$ for $i=1, \dots, l$ and $D_{ii} = \frac{1}{2C_u}$ for $i=l+1, \dots, n$. Then the {Wolfe} dual~\cite{piccialli2018nonlinear} of Problem~\eqref{eq:S3VMprimal} is 
\begin{equation}\label{eq:S3VMdual}
\begin{aligned}
\max_{\alpha} \quad & e^\top \alpha-\frac{1}{2} \alpha^\top\left( K\circ yy^\top\right)\alpha \\
\textrm{s.\,t.} \quad & \alpha \geq 0,
\end{aligned}
\end{equation}
{where $K \coloneqq \bar{K} + D$ is a positive definite matrix.}

By looking at the Karush-Kuhn-Tucker (KKT) conditions and denoting by $\mu\in\mathbb{R}^n$ the multipliers of the nonnegativity constraints $\alpha\ge 0$, we derive the closed-form expression
\begin{equation}\label{eq:S3VMKKTdual}
 \alpha^\star=\left(K\circ yy^\top\right)^{-1}(e +\mu)
\end{equation}
for the solution of the dual. Recall that complementarity conditions imply that
\begin{equation}\label{eq:compl}
    \alpha_i\mu_i=0, \quad i=1,\dots,n.
\end{equation}
Combining~\eqref{eq:S3VMKKTdual} and~\eqref{eq:compl} and exploiting that $\left(K\circ yy^\top\right)^{-1} = K^{-1}\circ yy^\top$ and that for any $x, y\in \mathbb{R}^n, \ Q \succ 0$, it holds $x^\top(Q\circ yy^\top)x = (x\circ y)^\top Q(x\circ y)$, we can rewrite Problem~\eqref{eq:S3VMdual} as
\begin{equation*}
\begin{aligned}
\min_{\mu} \quad & \frac 1 2 \left((\mu+e)\circ y\right)^\top K^{-1}\left((\mu+e)\circ y\right)\\
\textrm{s.\,t.} \quad & \mu \geq 0.
\end{aligned}
\end{equation*}
We then replace Problem~\eqref{eq:S3VMprimal} in Problem~\eqref{eq:S3VMmixed} and end up with the following MIP:
\begin{equation}\label{eq:S3VMdualmu2}
\begin{aligned}
\min_{\mu, y^u} \quad & \frac 1 2 \left((\mu+e)\circ y\right)^\top K^{-1}\left((\mu+e)\circ y\right)\\
\textrm{s.\,t.} \quad & \mu \geq 0, \ y \coloneqq (y^l; y^u)^\top, ~ y^u \in \{-1, 1\}^{n-l}. \\
\end{aligned}
\end{equation}
Now, if we set $v = (\mu + e) \circ y$, the (convex) objective function becomes $\frac{1}{2}v^TK^{-1}v$. The constraints $\mu_i\ge 0$ become $y_iv_i\ge 1$ for labeled points and $v_i^2\ge 1$ for unlabeled points, yielding the following non-convex QCQP~\cite{bai2016conic}:
\begin{equation}\label{eq:orig_prob}
\begin{aligned}
\min_{v} \quad & \frac 1 2 v^\top K^{-1} v\\
\textrm{s.\,t.} \quad & y_i v_i \geq 1, \qquad i = 1,\dots,l, \\
& v_i^2 \geq 1, \qquad i = l + 1,\dots, n, \\
& v \in \mathbb{R}^n. \\
\end{aligned}
\end{equation}
\begin{remark}
\label{remark:support_vector}
Given the complementarity condition~\eqref{eq:compl}, we have that whenever a data point $x_i$ is a support vector, meaning $\alpha_i>0$, it must hold that $\mu_i=0$. Therefore, since $v_i=(\mu_i+1)y_i$, we get $v_i = y_i$. However, this is only a necessary condition for $x_i$ being a support vector since we may have $\alpha_i=\mu_i=0$.
\end{remark} 
\noindent\newline
From a feasible solution $v$ of Problem~\eqref{eq:orig_prob}, it is possible to obtain a feasible solution $y$ of Problem~\eqref{eq:S3VMdualmu2} by setting $y_i = \sign(v_i)$ for $i=1, \dots, n$. Moreover, the objective function values of both problems coincide. 

It is known that minimizing the S3VM objective does not necessarily align with the performance of the S3VM classifier in terms of accuracy. This discrepancy is primarily attributed to the presence of multiple local minima~\citep{chapelle2006branch, chapelle2008optimization}. However, even when Problem~\eqref{eq:orig_prob} is solved to global optimality, we empirically found that there is no guarantee that the resulting optimal S3VM classifier will exhibit high accuracy. Consequently, lower objective values do not consistently translate into improved classification performance. To address this issue, we introduce a balancing constraint to Problem~\eqref{eq:orig_prob}. This constraint is also used to avoid trivial solutions (with all unlabeled data points being classified as belonging to a single class), especially when the number of labeled examples is significantly smaller than that of unlabeled ones~\citep{chapelle2008optimization}. To enhance the robustness of S3VM models, Chapelle and Zien~\citep{chapelle2005semi} proposed modifications to the model in~\eqref{eq:S3VMmixed} by introducing a balancing constraint under the assumption of a linear kernel:
\begin{equation}\label{eq:or_balancing}
    \frac{1}{(n-l)}\sum_{i=l+1}^n \textrm{sign}(w^\top x_i) = \frac{1}{l}\sum_{i=1}^l y_i.
\end{equation}
This constraint ensures that the fraction of positive and negative assignments to the unlabeled data points also equals the fraction found in the labeled data points. {For us, the equivalent of constraint~\eqref{eq:or_balancing} is:
\begin{equation}\label{eq:or_balancing_sign}
    \frac{1}{(n-l)}\sum_{i=l+1}^n \textrm{sign}(v_i) = \frac{1}{l}\sum_{i=1}^l y_i.
\end{equation}
However, since this constraint cannot be handled explicitly, we decided to approximate it with the following new linear constraint:
}
\begin{equation}\label{eq:our_balancing}
        \frac{1}{(n-l)}\sum_{i=l+1}^n v_i = \frac{1}{l}\sum_{i=1}^l y_i.
\end{equation}
{The above constraint is an approximation of~\eqref{eq:or_balancing_sign}. Its foundation lies in the connection between Problems~\eqref{eq:S3VMmixed} and~\eqref{eq:orig_prob}, where the sign of $v_i$ represents the decision for the \mbox{$i$-th} data point. Thanks to Remark~\ref{remark:support_vector}, all support vectors satisfy $v_i=y_i$. Therefore, if all the points were support vectors, the two constraints~\eqref{eq:or_balancing_sign} and~\eqref{eq:our_balancing} would coincide. However, we have $v_i = y_i(\mu_i + 1)$ for points that are not support vectors, which may result in $|v_i| > 1$. In this scenario, constraint~\eqref{eq:our_balancing} encourages a distribution of the signs of the $v_i$ that mirrors the class distribution among the labeled data points.
}

While some approaches employing the S3VM formulation in~\eqref{eq:orig_prob} do not incorporate any class balancing constraint (see, e.g.,~\cite{bai2016conic, tian2017new}), our specific focus is on solving the QCQP problem outlined in~\eqref{eq:orig_prob} while incorporating the balancing constraint~\eqref{eq:our_balancing}. This adjustment is made to enhance the robustness and accuracy of the globally optimal solution, as reported in Section~\ref{sec:ml}. Thus, our S3VM model becomes
\begin{equation}\label{eq:orig_prob_balancing}
\begin{aligned}
\min_{v} \quad & \frac 1 2 v^\top K^{-1} v\\
\textrm{s.\,t.} \quad & y_i v_i \geq 1, \qquad i = 1,\dots,l, \\
& v_i^2 \geq 1, \qquad i = l + 1,\dots, n, \\
&  \frac{1}{(n-l)}\sum_{i=l+1}^n v_i = \frac{1}{l}\sum_{i=1}^l y_i, \\
& v \in \mathbb{R}^n. \\
\end{aligned}
\end{equation}
From a practical standpoint, non-convex QCQP problems are one of the most challenging optimization problems: the current size of instances that can be solved to provable optimality by general-purpose solvers remains very small. Therefore, our aim is twofold: (i) relaxing the intractable program in~\eqref{eq:orig_prob_balancing} into a tractable conic program, yielding computationally efficient bounds; and (ii) developing a branch-and-cut algorithm capable of solving large-scale instances to global optimality.

\subsection{Convex relaxations}\label{sec:conv_rel}
{In this section, we review the convex relaxations of Problem~\eqref{eq:orig_prob} proposed in the literature and introduce our starting relaxation. The literature explores two main alternatives: QP and SDP relaxations.}
\newline\newline
{\textbf{QP relaxations}} The standard QP relaxation is obtained by dropping the non-convex constraints $v_i^2 \ge 1$ for $i=l+1, \dots, n$ {in Problem~\eqref{eq:orig_prob}}. Thus, lower bounds can be computed by solving the resulting convex QP. Alternatively, the authors in~\cite{tian2017new} apply Lagrangian relaxation to Problem~\eqref{eq:orig_prob}, penalizing violations of non-convex constraints in the objective function. The Lagrange multipliers are estimated by solving an auxiliary SDP. More in detail, for a fixed $\lambda \geq 0$, Problem~\eqref{eq:orig_prob} can be relaxed to the following problem:
\begin{equation}\label{eq:qp_pen_bound}
\begin{aligned}
\min_{v} \quad & \frac{1}{2} v^\top (K^{-1} - 2\textrm{Diag}(\lambda)) v + e^\top \lambda \\
\textrm{s.\,t.} \quad & y_i v_i \geq 1, \qquad i = 1, \dots, l\\
& v \in \mathbb{R}^n.
\end{aligned}
\end{equation}
The corresponding vector of Lagrange multipliers $\lambda$ is estimated in~\cite{tian2017new} by solving the auxiliary SDP
\begin{equation}\label{eq:qp_pen_bound_aux}
\begin{aligned}
\max_{\lambda} \quad & e^\top \lambda \\
\textrm{s.\,t.} \quad & K^{-1} - 2\textrm{Diag}(\lambda) \succeq 0,\\
& \lambda \geq 0.
\end{aligned}
\end{equation}
\newline\newline
\noindent
{\textbf{SDP relaxations}} Non-convex QCQPs can also be
approximately solved via tractable SDP relaxations. Specifically, Problem~\eqref{eq:orig_prob} can be reformulated as the following non-convex SDP problem in a lifted space:
\begin{equation*}
\begin{aligned}
\min_{v, V} \quad & \frac{1}{2}\langle K^{-1} , V \rangle \\
\textrm{s.\,t.} \quad & y_iv_i \geq 1, \qquad i = 1,\dots,l, \\
& \operatorname{diag}(V) \geq e, \\
& V = vv^\top, \ v \in \mathbb{R}^n.\\
\end{aligned}
\end{equation*}
By relaxing the constraint $V=vv^\top$ into $V-vv^\top \succeq 0$ and applying the Schur complement, we end up with the basic SDP relaxation already considered in~\cite{bai2016conic}:
\begin{equation}\label{eq:SDP_basicrel}
\begin{aligned}
\min_{\bar{V}} \quad & \frac 1 2\langle K^{-1} , V \rangle \\
\textrm{s.\,t.} \quad & y_iv_i \geq 1, \qquad i = 1,\dots,l, \\
& \operatorname{diag}(V) \geq e, \\
& \bar{V} = \begin{pmatrix}
		V & v\\
		v^\top & 1\\
		\end{pmatrix} \succeq 0, ~ v \in \mathbb{R}^n, ~ V \in \mathbb{S}^n.
\end{aligned}
\end{equation}
Problem~\eqref{eq:SDP_basicrel} provides a lower bound on the optimal objective value of Problem~\eqref{eq:orig_prob}. Clearly, the bound provided by Problem~\eqref{eq:SDP_basicrel} is stronger than that provided by the QP relaxation. Tighter bounds can be obtained by solving the Doubly Nonnegative relaxation (DNN) of Problem~\eqref{eq:S3VMdualmu2}. To this end, Bai and Yan~\citep{bai2016conic} rewrite Problem~\eqref{eq:S3VMdualmu2} as a MIP using 0-1 variables. Then, they reformulate the problem as the following non-convex QCQP:
\begin{equation*}
\begin{aligned}
\min_{u} \quad & \frac 1 2 u^\top \tilde{K}^{-1} u\\
\textrm{s.\,t.} \quad & y_i(2u_i - u_{i+n}) \geq 1, \qquad i=1, \dots, l, \\
& u_j \geq 0, ~ u_{j+n} \geq 1, ~ u_j u_{j+n} - u_j^2 = 0, \qquad j = 1, \dots, n,\\
& u \in \mathbb{R}^{2n},
\end{aligned}
\end{equation*}
where $\tilde{K} \coloneqq (4K^{-1}, -2K^{-1}; -2K^{-1}, K^{-1}) \in \mathbb{S}^{2n}$. By relaxing the constraint $U = uu^\top$ into $U - uu^\top \succeq 0$ and applying the Schur complement, the DNN relaxation reads
\begin{equation}\label{eq:DNN_large}
\begin{aligned}
\min_{\bar{U}} \quad & \frac{1}{2} \langle \bar{K}^{-1} , U \rangle \\
\textrm{s.\,t.} \quad & y_i(2u_i - u_{i+n}) \geq 1, \qquad i=1, \dots, l, \\
& U_{j, j+n} - U_{j, j} = 0, \ u_{j+n} \geq 1, \qquad j=1, \dots, n, \\
& U_{k, k} - u_k \geq 0, \qquad k=1, \dots, 2n, \\
& \bar{U} = \begin{pmatrix}
		1 & u^\top\\
		u & U\\
		\end{pmatrix} \succeq 0, ~ \bar{U} \geq 0.
\end{aligned}
\end{equation}
Although Problem~\eqref{eq:DNN_large} is a convex, its size is larger than that of the basic SDP~\eqref{eq:SDP_basicrel}. Specifically, Problem~\eqref{eq:DNN_large} involves optimizing a matrix variable of dimensions \mbox{$(2n + 1) \times (2n + 1)$} over the intersection of the positive semidefinite cone and the nonnegative orthant. Moreover, according to our computational experiments, the DNN relaxation~\eqref{eq:DNN_large} provides a lower bound that is only slightly better than that of the basic SDP~\eqref{eq:SDP_basicrel}, and the computational effort to obtain such bound is much higher. Since existing SDP solvers struggle with large-size problems, the DNN relaxation~\eqref{eq:DNN_large} is not directly applicable within a branch-and-bound approach. This motivated us to strengthen the SDP relaxation~\eqref{eq:SDP_basicrel} instead, leading to the derivation of a new relaxation and valid constraints. These enhancements provide high-quality and computationally efficient bounds when embedded into a branch-and-cut algorithm.

\section{Branch-and-cut Approach }\label{sec:bac}
In this section, we present the main ingredients of our branch-and-cut algorithm. Specifically, we describe the lower bound computation, the primal heuristic, and the branching scheme. We start by rewriting Problem~\eqref{eq:orig_prob_balancing} in the more convenient form
\begin{equation}\label{eq:mod_prob}
\begin{aligned}
\min_{x} \quad & x^\top C x\\
\textrm{s.\,t.} \quad & 1 = L_i \leq x_i, \qquad i = 1,\dots,l \ : \ y_i=1,\\
 &  x_i\leq U_i=-1, \qquad i = 1,\dots,l \ : \ y_i=-1,\\
& x_i^2 \geq 1, \qquad i = 1,\dots, n, \\
&  \frac{1}{(n-l)}\sum_{i=l+1}^n x_i = \frac{1}{l}\sum_{i=1}^l y_i, \\
& x \in \mathbb{R}^n, \\
\end{aligned}
\end{equation}
where $C = \frac{1}{2} K^{-1}$. In Problem~\eqref{eq:mod_prob}, we have substituted the constraints $y_i x_i \geq 1$, $i = 1,\dots,l$, by writing them as explicit lower or upper bound constraints on the variables $x_i$. {From now on, we focus on defining our branch-and-cut approach for solving Problem~\eqref{eq:mod_prob} to global optimality.}

\subsection{{Lower bound computation}}\label{sec:lb_bt}

{
The success of any branch-and-cut algorithm relies on the quality and computational efficiency of its lower bounds. Strong lower bounds can significantly prune the search space, resulting in faster convergence and reduced computational effort. In the context of our problem, we utilize various global optimization tools to develop a sophisticated bounding routine, which is crucial for the overall efficiency of the algorithm. The first step is to introduce valid box constraints.}
\newline\newline
{\textbf{Bound tightening}}
The feasible set of Problem~\eqref{eq:mod_prob} is unbounded. Nevertheless, it is possible to introduce optimality-based box constraints $L_i \leq x_i \leq U_i$ for all variables $x_i$ once we have an upper bound on the optimal objective value of Problem~\eqref{eq:orig_prob_balancing}. This is especially desirable for variables corresponding to unlabeled data points to derive further valid constraints later. The idea is to solve auxiliary convex optimization problems to compute lower and upper bounds on all the variables such that at least one globally optimal solution of Problem~\eqref{eq:orig_prob_balancing} remains within the restricted feasible set.

Given a non-convex optimization problem and a tractable convex relaxation of it, classical optimality-based bound tightening computes the tightest bounds valid for all relaxation solutions by in turn minimizing and maximizing each variable, and imposing an objective cutoff~\cite{gleixner2017three}. If $UB$ denotes an upper bound for Problem~\eqref{eq:orig_prob_balancing}, then we can compute a new lower bound ${L}_i$ and a new upper bound ${U}_i$ for variable $x_i$ by solving QCQPs of the form
\begin{equation}\label{eq:boundxi}
\begin{aligned}
{L}_i / {U}_i = \min / \max \quad & x_i \\
\textrm{s.\,t.} \quad & L_i \leq x_i \leq U_i, \qquad i = 1,\dots,n, \\
		& x^\top C x \leq UB, \\
            &  \frac{1}{(n-l)}\sum_{i=l+1}^n x_i = \frac{1}{l}\sum_{i=1}^l y_i, \\
		& x \in \mathbb{R}^n, \\
\end{aligned}
\end{equation}
where some bound constraints are $- \infty$ or $+ \infty$ in the beginning. {With abuse of notation, we refer to $L_i$ and $U_i$ both as the initial bounds on the variables $x_i$ in the feasible set definition and the updated bounds obtained from solving problems of type~\eqref{eq:boundxi}, with the new values replacing the old ones whenever a bound constraint is updated.} Since $C$ is a positive definite matrix, Problem~\eqref{eq:boundxi} is a convex and can be solved efficiently. Applying a complete round of optimality-based bound tightening involves solving up to $2n$ convex QCQPs, each with one quadratic constraint. Note that the lower bound computation for variable $x_i$ can be skipped if $L_i \geq 1$ already holds since the optimal objective value of~\eqref{eq:boundxi} would most likely be $L_i$ in this case. The same applies to the upper bound computation if $U_i \leq -1$ already holds.
\\
\begin{remark}[Projecting box constraints]\label{rem2}
Whenever the optimal objective value of Problem~\eqref{eq:boundxi} yields a new lower bound $L_i>-1$, we can set $L_i$ to $L_i \coloneqq \max\{L_i, 1\}$. This is due to the non-convex constraints $x_j^2 \geq 1$ in Problem~\eqref{eq:orig_prob_balancing} that require the absolute value of each variable to be at least one. Similarly, we can set $U_i$ to $U_i \coloneqq \min\{U_i,-1\}$ if the optimal objective value of Problem~\eqref{eq:boundxi} yields an upper bound $U_i < 1$. In both cases, the label of the corresponding data point ($-1$ or $+1$) becomes fixed, and the non-convex constraint $x_i^2 \geq 1$ could be removed from Problem~\eqref{eq:orig_prob_balancing}. This reduces the size of the branch-and-bound tree and significantly impacts the overall efficiency of the algorithm.
\end{remark}
\noindent\newline
Remark~\ref{rem2} gives a strong motivation to recompute the box constraints from time to time if new information is available. In fact, we solve all QCQPs of type~\eqref{eq:boundxi} again whenever we have updated the best known upper bound $UB$. 
\newline\newline
{\textbf{Basic SDP relaxation}}
Once lower and upper bounds on each variable have been computed by solving problems of type~\eqref{eq:boundxi}, Problem~\eqref{eq:mod_prob} can be equivalently stated as the non-convex matrix optimization problem
\begin{equation}\label{eq:matrix_prob}
\begin{aligned}
\min_{x, X} \quad & \langle C , X \rangle \\
\textrm{s.\,t.} \quad & L_i \leq x_i \leq U_i, \qquad i = 1,\dots,n, \\
& 1 \leq X_{ii} \leq \max\{L_i^2,U_i^2\}, \qquad i = 1,\dots,n \\
&  \frac{1}{(n-l)}\sum_{i=l+1}^n x_i = \frac{1}{l}\sum_{i=1}^l y_i, \\
& X = xx^\top, \ x \in \mathbb{R}^n.\\
\end{aligned}
\end{equation}
In Problem~\eqref{eq:matrix_prob}, we have also bounded the main diagonal of $X$ via the constraints $X_{ii} \leq \max\{L_i^2,U_i^2\}$ for $i = 1,\dots,n$. These can be derived by using the box constraints on $x_i$ and the fact that $x_i^2 = X_{ii}$ must hold in any feasible solution of~\eqref{eq:matrix_prob}.

By relaxing the constraint $X=xx^\top$ into $X-xx^\top\succeq 0$ and applying the Schur complement, we end up with our first SDP relaxation
\begin{equation}\label{eq:our_basic}
\begin{aligned}
\min_{\bar{X}} \quad & \langle C , X \rangle \\
\textrm{s.\,t.} \quad & L_i \leq x_i \leq U_i, \qquad i = 1,\dots,n, \\
& 1 \leq X_{ii} \leq \max\{ L_i^2, U_i^2 \}, \qquad i = 1,\dots,n, \\
&  \frac{1}{(n-l)}\sum_{i=l+1}^n x_i = \frac{1}{l}\sum_{i=1}^l y_i, \\
& \bar{X} = \begin{pmatrix}
		X & x\\
		x^\top & 1\\
		\end{pmatrix} \succeq 0, ~ x \in \mathbb{R}^n, ~ X \in \mathbb{S}^n.
\end{aligned}
\end{equation}
\begin{proposition}
Assume that $L_i \in \mathbb{R}$ and $U_i \in \mathbb{R}$ are computed by solving problems of type~\eqref{eq:boundxi} for $i=1,\ldots,n$. Then, Problem~\eqref{eq:our_basic} provides a lower bound at least as strong as the one provided by Problem~\eqref{eq:SDP_basicrel}.
\end{proposition}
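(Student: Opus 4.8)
The plan is to prove the claim by feasible-set containment. Identifying the variables $(X,x)$ of~\eqref{eq:our_basic} with $(V,v)$ of~\eqref{eq:SDP_basicrel}, I would first observe that the two objectives coincide: since $C=\tfrac12 K^{-1}$, we have $\langle C,X\rangle=\tfrac12\langle K^{-1},X\rangle$. Hence it suffices to show that every $\bar X$ feasible for~\eqref{eq:our_basic} is also feasible for~\eqref{eq:SDP_basicrel}. Minimizing the same objective over a smaller feasible set produces a value that is no smaller, which is precisely a lower bound at least as strong.

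Next I would check the three constraint groups of~\eqref{eq:SDP_basicrel} one by one against a point feasible for~\eqref{eq:our_basic}. The positive semidefiniteness of $\bar X$ is literally the same constraint in both problems, so it transfers immediately. The constraint $\operatorname{diag}(V)\ge e$ follows directly from the lower diagonal bounds $X_{ii}\ge 1$ already present in~\eqref{eq:our_basic}. The only nontrivial part is the labeled linear constraints $y_iv_i\ge 1$ for $i=1,\dots,l$, which appear in explicit form in~\eqref{eq:SDP_basicrel} but in~\eqref{eq:our_basic} are encoded implicitly through the box constraints $L_i\le x_i\le U_i$.

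The main obstacle, and the step I would argue most carefully, is that after bound tightening the box constraints on the labeled variables still imply $y_ix_i\ge 1$. Recall that in~\eqref{eq:mod_prob} the initial bounds are $L_i=1$ whenever $y_i=1$ and $U_i=-1$ whenever $y_i=-1$. When $L_i$ and $U_i$ are recomputed by solving problems of type~\eqref{eq:boundxi}, the feasible region of~\eqref{eq:boundxi} still contains the original constraint $x_i\ge 1$ (resp. $x_i\le -1$); consequently the minimization (resp. maximization) defining the updated bound cannot return a value below $1$ (resp. above $-1$). Thus the tightened bounds satisfy $L_i\ge 1$ for $y_i=1$ and $U_i\le -1$ for $y_i=-1$, so any $x$ feasible for~\eqref{eq:our_basic} obeys $y_ix_i\ge 1$. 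Having verified all three constraint groups, the containment follows. The remaining constraints of~\eqref{eq:our_basic}, namely the diagonal upper bounds $X_{ii}\le\max\{L_i^2,U_i^2\}$ and the balancing equation, only shrink the feasible set further, which is consistent with, and indeed reinforces, the claimed inequality between the two bounds.
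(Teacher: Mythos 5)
Your overall strategy coincides with the paper's own proof: both argue via feasible-set containment under a common objective (using $C=\tfrac12 K^{-1}$). In fact, your verification of the containment is more careful than the paper's, which merely asserts that the feasible set of~\eqref{eq:our_basic} is contained in that of~\eqref{eq:SDP_basicrel}; you check each constraint group and justify the one delicate inclusion, namely that the tightened bounds still satisfy $L_i \geq 1$ when $y_i = 1$ and $U_i \leq -1$ when $y_i = -1$, so the labeled constraints $y_i x_i \geq 1$ of~\eqref{eq:SDP_basicrel} are implied by the box constraints of~\eqref{eq:our_basic}.

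There is, however, a gap. Containment plus a common objective shows only that the optimal value of~\eqref{eq:our_basic} is no smaller than that of~\eqref{eq:SDP_basicrel}; it does not show that this value is still a \emph{lower bound} on the non-convex problem~\eqref{eq:orig_prob_balancing}, which is part of what the proposition asserts. Your sentence ``minimizing the same objective over a smaller feasible set produces a value that is no smaller, which is precisely a lower bound at least as strong'' conflates the two: an arbitrary restriction of~\eqref{eq:SDP_basicrel} could have optimal value above the true optimum of~\eqref{eq:orig_prob_balancing}, in which case it would provide no lower bound at all. This validity step is exactly where the paper's proof uses the hypothesis that $L_i,U_i$ are computed by problems of type~\eqref{eq:boundxi}: the feasible region of~\eqref{eq:boundxi} contains every point feasible for~\eqref{eq:mod_prob} with objective at most $UB$, so the computed boxes preserve at least one globally optimal solution $x^\star$ of~\eqref{eq:orig_prob_balancing}. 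Then $\bigl(x^\star, x^\star (x^\star)^\top\bigr)$ is feasible for~\eqref{eq:our_basic} — the diagonal bounds follow from $(x^\star_i)^2 \geq 1$ together with $|x^\star_i| \leq \max\{|L_i|,|U_i|\}$, the balancing equation is a constraint of~\eqref{eq:orig_prob_balancing}, and the rank-one bordered matrix is positive semidefinite — hence~\eqref{eq:our_basic} is a relaxation of~\eqref{eq:orig_prob_balancing} and its optimal value is a valid lower bound. You use the OBBT hypothesis only to preserve the labeled bounds; it is also needed for this validity argument, and adding it completes your proof.
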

\begin{proof}
    The assumption on $L_i$ and $U_i$ implies that the box constraints $L_i\le x_i\le U_i$ are valid for Problem~\eqref{eq:mod_prob} for $i=1, \dots, n$. Therefore, Problem~\eqref{eq:our_basic} is a relaxation of Problem~\eqref{eq:orig_prob_balancing}. The result then follows from the feasible set of Problem~\eqref{eq:our_basic} being strictly contained in the feasible set of Problem~\eqref{eq:SDP_basicrel}.
\end{proof}
\noindent
{\textbf{Valid inequalities}} The SDP relaxation~\eqref{eq:our_basic} can be strengthened by adding constraints that are also valid for any feasible solution of Problem~\eqref{eq:matrix_prob}. Since we have box constraints, the constraints coming from the Reformulation-Linearization Technique (RLT)~\cite{sherali1995reformulation} are an interesting class for us. They can be derived by multiplying the nonnegative expressions $x_i - L_i \geq 0$ and $U_i - x_i \geq 0$ by the nonnegative expressions $x_j - L_j \geq 0$ and $U_j - x_j \geq 0$ for all $i,j = 1,\dots,n, \ i < j$, and then expanding to
\begin{equation}\label{RLT}
	\begin{split}
		X_{ij} &\geq \max \{ U_i x_j + U_j x_i - U_i U_j , \ L_i x_j + L_j x_i - L_i L_j \}, \\
		X_{ij} &\leq \min \{ L_i x_j + U_j x_i - L_i U_j , \ U_i x_j + L_j x_i - U_i L_j \}. \\
	\end{split}
\end{equation}
It will become evident in the computational results in Section~\ref{sec:results} that adding RLT inequalities~\eqref{RLT} significantly improves the SDP relaxation~\eqref{eq:our_basic}. In the end, we obtain lower bounds for Problem~\eqref{eq:orig_prob_balancing} by solving the SDP
\begin{equation}\label{eq:SDP_rel_RLT}
\begin{aligned}
\min_{\bar{X}} \quad & \langle C , X \rangle \\
\textrm{s.\,t.} \quad & L_i \leq x_i \leq U_i, \qquad i = 1,\dots,n, \\
& 1 \leq X_{ii} \leq \max\{ L_i^2, U_i^2 \}, \qquad i=1,\dots, n,\\
& \frac{1}{(n-l)}\sum_{i=l+1}^n x_i = \frac{1}{l}\sum_{i=1}^l y_i, \\
& X_{ij} \geq U_i x_j + U_j x_i - U_i U_j, \qquad i,j = 1,\dots,n,~ i<j, \\
		& X_{ij} \geq L_i x_j + L_j x_i - L_i L_j, \qquad i,j = 1,\dots,n,~ i<j, \\
		& X_{ij} \leq L_i x_j + U_j x_i - L_i U_j, \qquad i,j = 1,\dots,n,~ i<j, \\
		& X_{ij} \leq U_i x_j + L_j x_i - U_i L_j, \qquad i,j = 1,\dots,n,~ i<j. \\
& \bar{X} = \begin{pmatrix}
		X & x\\
		x^\top & 1\\
		\end{pmatrix} \succeq 0, \ x \in \mathbb{R}^n, \ X \in \mathbb{S}^n.
\end{aligned}
\end{equation}
Despite having only $\mathcal{O}(n^2)$ RLT inequalities, adding them all at once would make the relaxation intractable, even for a moderate number of data points, when using off-the-shelf interior-point solvers. Thus, we treat them as cutting planes and only add a few of the most violated ones during each cutting-plane iteration.

Further valid constraints for~\eqref{eq:matrix_prob} can be generated by multiplying the balancing constraint~\eqref{eq:our_balancing} by any variable $x_j, \ j = 1,\dots,n$, resulting in the constraints
\begin{equation*}
    \frac{1}{(n-l)}\sum_{i=l+1}^n x_i x_j = \left(\frac{1}{l}\sum_{i=1}^l y_i\right)x_j, \qquad j = 1,\dots,n.
\end{equation*}
These product constraints can then be linearized and added to the SDP relaxation~\eqref{eq:SDP_rel_RLT} by replacing any quadratic term $x_ix_j$ by $X_{ij}$. Adding these constraints improves the lower bound but slows down computation. Therefore, they can be beneficial for problems with a weak lower bound, but otherwise, they can be neglected.


For several reasons, the box constraints $L_i \leq x_i \leq U_i$ play a crucial role in our setting. Firstly, they indicate whether the sign of a variable $x_i$ is known, which is the case whenever $L_i \geq 1$ or $U_i \leq -1$ holds. This reduces the degree of non-convexity in the problem since the constraint $x_i^2 \geq 1$ can be removed. Secondly, tighter box constraints can significantly impact the quality of our lower bound as the RLT inequalities become more restrictive. The box constraints can be improved by resolving problems of type~\eqref{eq:boundxi} whenever a better upper bound for~\eqref{eq:orig_prob_balancing} is found. Additionally, every branching step also impacts the box constraints, see Section~\ref{sec:branching}.
\newline\newline
{\textbf{Marginals-based bound tightening}}
Moreover, to further tighten the box constraints, we use a standard technique in global optimization, namely marginals-based bound tightening as proposed in~\cite{ryoo1996branch}. The general idea is the following, see~\cite[Theorem~2]{ryoo1996branch}: let $g(x,X)\leq 0$ be an active constraint in our SDP relaxation~\eqref{eq:SDP_rel_RLT} with corresponding dual multiplier $\lambda >0$ at the optimal solution with objective value $LB$. Moreover, let $UB$ be an upper bound on Problem~\eqref{eq:mod_prob}. Then the constraint
\begin{equation}\label{marginal}
    g(x,X) \geq - \frac{UB - LB}{\lambda}
\end{equation}
is valid for all solutions of Problem~\eqref{eq:mod_prob} with objective value better than $UB$. In other words, the constraint~\eqref{marginal} may cut off feasible points but preserves at least one globally optimal solution.

An important special case of this idea arises when applying it to the box constraints $L_i \leq x_i \leq U_i$: if $L_i \leq x_i$ is active at the optimum of~\eqref{eq:SDP_rel_RLT} with dual multiplier $\lambda_i^L > 0$, then we can update the upper bound $U_i$ via
\begin{equation*}
    U_i \coloneqq \min \left\{ U_i , L_i + \frac{UB - LB}{\lambda_i^L}\right\}.
\end{equation*}
Analogously, if $x_i \leq U_i$ is active with dual multiplier $\lambda_i^U > 0$, then we can update the lower bound $L_i$ via
\begin{equation*}
L_i \coloneqq \max \left\{ L_i , U_i - \frac{UB - LB}{\lambda_i^U} \right\}.
\end{equation*}
Additionally, we can exploit the non-convex constraint $X = xx^\top$ in Problem~\eqref{eq:mod_prob} by applying the same idea to the main diagonal of $X$. \\
\begin{lemma}
Let $i \in \{1,\dots,n\}$. If the constraint $X_{ii} \geq 1$ is active at the optimal solution of~\eqref{eq:SDP_rel_RLT} with dual multiplier $\lambda > 0$, then we can update $L_i$ and $U_i$ via
\begin{equation*}
L_i \coloneqq \max \left\{ L_i , - \sqrt{1 + \frac{UB - LB}{\lambda}} \right\},\quad U_i \coloneqq \min \left\{ U_i , \sqrt{1 + \frac{UB - LB}{\lambda}} \right\}.
\end{equation*}
\end{lemma}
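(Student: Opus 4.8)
The plan is to instantiate the general marginals-based bound tightening principle recorded in~\eqref{marginal} for the specific active constraint $X_{ii} \geq 1$, and then to translate the resulting linear inequality on $X_{ii}$ into a box constraint on $x_i$ via the rank-one identity $X_{ii} = x_i^2$. This keeps the argument almost entirely at the level of a direct substitution into the inequality~\eqref{marginal}, so the work is in setting up the standard form correctly rather than in any new estimate.

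First I would rewrite the diagonal lower bound in the form required by~\eqref{marginal}. Setting $g(x, X) \coloneqq 1 - X_{ii}$, the constraint $X_{ii} \geq 1$ reads $g(x, X) \leq 0$, and by hypothesis it is active at the optimal solution of~\eqref{eq:SDP_rel_RLT} with dual multiplier $\lambda > 0$ and optimal value $LB$. Applying~\eqref{marginal} then yields the valid inequality $1 - X_{ii} \geq - \frac{UB - LB}{\lambda}$, that is, $X_{ii} \leq 1 + \frac{UB - LB}{\lambda}$, which holds for every feasible solution of Problem~\eqref{eq:mod_prob} whose objective value is at least as good as $UB$. Next I would invoke the equivalence of Problem~\eqref{eq:mod_prob} with the matrix formulation~\eqref{eq:matrix_prob}, in which the constraint $X = xx^\top$ forces $X_{ii} = x_i^2$ at every feasible point. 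Substituting this identity gives $x_i^2 \leq 1 + \frac{UB - LB}{\lambda}$, and taking square roots produces the two-sided bound $-\sqrt{1 + \frac{UB - LB}{\lambda}} \leq x_i \leq \sqrt{1 + \frac{UB - LB}{\lambda}}$. Intersecting this with the current box $[L_i, U_i]$ yields exactly the claimed updates for $L_i$ and $U_i$, and by the validity statement of~\eqref{marginal} at least one globally optimal solution of Problem~\eqref{eq:orig_prob_balancing} is preserved.

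The one point that deserves care is the passage from the relaxed diagonal variable $X_{ii}$ to the squared original variable $x_i^2$: the dual multiplier $\lambda$ and the inequality $X_{ii} \leq 1 + \frac{UB - LB}{\lambda}$ are produced at the level of the SDP relaxation~\eqref{eq:SDP_rel_RLT}, where only $X - xx^\top \succeq 0$ is imposed rather than the equality $X = xx^\top$. I would stress that this is not an obstacle, because~\eqref{marginal} is asserted to be valid for the feasible points of the \emph{original} problem~\eqref{eq:mod_prob}/\eqref{eq:matrix_prob} with improving objective, and it is precisely on those points that the equality $X_{ii} = x_i^2$ is restored. Hence the substitution is legitimate and no further reasoning about the relaxation gap is needed; the derived bound may well cut off relaxation solutions with $X_{ii} > x_i^2$, but it never removes an improving feasible solution of the original problem.
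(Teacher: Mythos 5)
Your proposal is correct and follows essentially the same route as the paper's own proof: both rewrite $X_{ii} \geq 1$ in the standard form required by~\eqref{marginal}, derive $X_{ii} \leq 1 + \frac{UB - LB}{\lambda}$, and then use the identity $x_i^2 = X_{ii}$, which holds at every feasible point of Problem~\eqref{eq:matrix_prob}, to convert this into the symmetric box bound on $x_i$. Your additional remark about why the substitution $X_{ii} = x_i^2$ is legitimate despite the multiplier coming from the relaxation is a fair elaboration of what the paper leaves implicit, but it does not change the argument.
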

\begin{proof}
    We rewrite the constraint $X_{ii} \geq 1$ as $-X_{ii} + 1 \leq 0$. Using~\eqref{marginal}, the constraint 
    \begin{equation*}
    - X_{ii} + 1 \geq - \frac{UB - LB}{\lambda}
    \quad\Leftrightarrow\quad
    X_{ii} \leq 1 + \frac{UB - LB}{\lambda} \eqqcolon \delta
    \end{equation*}
    preserves at least one globally optimal solution and can be added to the SDP relaxation~\eqref{eq:SDP_rel_RLT} and the non-convex Problem~\eqref{eq:matrix_prob}. Moreover, $x_i^2 = X_{ii}$ must hold in any feasible solution of Problem~\eqref{eq:matrix_prob}. Thus, the constraints $-\sqrt{\delta} \leq x_i \leq \sqrt{\delta}$ also preserve at least one globally optimal solution.
\end{proof}
Furthermore, our SDP relaxation~\eqref{eq:SDP_rel_RLT} includes constraints of the form $X_{ii} \leq \max\{L_i^2,U_i^2\}$, which facilitate further tightening of the box constraints. \\
\begin{lemma}\label{lem:tightening}
    Let $i \in \{1,\dots,n\}$. Assume that a constraint of type $X_{ii} \leq \gamma$ is active at the optimal solution of~\eqref{eq:SDP_rel_RLT} with dual multiplier $\lambda > 0$ such that $p \coloneqq \gamma - \frac{UB - LB}{\lambda} \geq 1$. Then the following holds:
    \begin{enumerate}[label=(\roman*)]
        \item If $L_i > - \sqrt{p}$, then we can update $L_i$ via $L_i \coloneqq \max\{L_i,\sqrt{p}\}$.
        \item If $U_i < \sqrt{p}$, then we can update $U_i$ via $U_i \coloneqq \min\{U_i,-\sqrt{p}\}$.
    \end{enumerate}
\end{lemma}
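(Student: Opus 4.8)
The plan is to follow the same template as the preceding lemma, the one difference being that here the active constraint is an \emph{upper} bound on $X_{ii}$, so applying the marginals rule~\eqref{marginal} produces a \emph{lower} bound on $X_{ii}$. First I would write the active constraint as $X_{ii} - \gamma \leq 0$ and apply~\eqref{marginal} with its dual multiplier $\lambda > 0$. This gives the valid inequality $X_{ii} - \gamma \geq -\frac{UB - LB}{\lambda}$, equivalently $X_{ii} \geq \gamma - \frac{UB - LB}{\lambda} = p$, and by the guarantee behind~\eqref{marginal} it preserves at least one globally optimal solution of Problem~\eqref{eq:mod_prob} (equivalently of~\eqref{eq:matrix_prob}).

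Next I would convert this diagonal bound into a bound on $x_i$ using $x_i^2 = X_{ii}$, which holds in every feasible solution of the non-convex Problem~\eqref{eq:matrix_prob}. Because $p \geq 1 > 0$, the inequality $X_{ii} \geq p$ reads $x_i^2 \geq p$, i.e. $|x_i| \geq \sqrt{p}$, so every point of the (nonempty) restricted feasible set lies in exactly one of the two branches $x_i \geq \sqrt{p}$ or $x_i \leq -\sqrt{p}$. The crux is then to use the current box constraints to discard one branch. In case (i) the hypothesis $L_i > -\sqrt{p}$ combined with $x_i \geq L_i$ forces $x_i > -\sqrt{p}$, which rules out the branch $x_i \leq -\sqrt{p}$; hence every retained feasible point satisfies $x_i \geq \sqrt{p}$, and setting $L_i \coloneqq \max\{L_i, \sqrt{p}\}$ discards no such point, in particular keeping the preserved global optimum. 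Case (ii) is symmetric: $U_i < \sqrt{p}$ together with $x_i \leq U_i$ forces $x_i < \sqrt{p}$, eliminating the positive branch and justifying $U_i \coloneqq \min\{U_i, -\sqrt{p}\}$.

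I expect the only delicate step to be the branch disambiguation: one must check that the box constraint genuinely excludes an \emph{entire} side of the non-convex region $|x_i| \geq \sqrt{p}$, rather than merely shrinking it, and that the solution preserved by~\eqref{marginal} indeed survives on the remaining side. The assumption $p \geq 1$, which yields $\sqrt{p} \geq 1$, is precisely what makes the update consistent with the existing constraint $x_i^2 \geq 1$ in~\eqref{eq:orig_prob_balancing}, so that no globally optimal solution is lost in either case. Everything else reduces to routine manipulation of the marginal inequality and the identity $x_i^2 = X_{ii}$.
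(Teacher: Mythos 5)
Your proposal is correct and follows essentially the same route as the paper's proof: apply the marginal inequality~\eqref{marginal} to $X_{ii} - \gamma \leq 0$ to obtain $X_{ii} \geq p \geq 1$, invoke $x_i^2 = X_{ii}$ to split into the two branches $x_i \geq \sqrt{p}$ and $x_i \leq -\sqrt{p}$, and use the box-constraint hypotheses to exclude one branch in each case. Your write-up is in fact slightly more explicit than the paper's about why the excluded branch is ruled out entirely and why the preserved global optimum survives on the remaining side, but the underlying argument is identical.
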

\begin{proof}
    Applying~\eqref{marginal} to the constraint $X_{ii} - \gamma \leq 0$ yields the new constraint
    \begin{equation*}
    X_{ii} - \gamma \geq - \frac{UB - LB}{\lambda}
    \quad\Leftrightarrow\quad
    X_{ii} \geq p \geq 1.
    \end{equation*}
    We know that $x_i^2 = X_{ii}$ must hold in any feasible solution of Problem~\eqref{eq:matrix_prob}. If $x_i^2 \geq p$ and $-\sqrt{p} < L_i \leq x$, then certainly $x_i \geq \sqrt{p}$ must hold. Analogously, if $x_i^2 \geq p$ and $x_i \leq U_i < \sqrt{p}$, then $x_i  \leq -\sqrt{p}$ must hold.
\end{proof}
Lemma~\ref{lem:tightening} establishes another scenario in which the degree of non-convexity can be reduced by fixing the sign of a variable. The main advantage of marginals-based bound tightening is that it is computationally inexpensive to apply. Thus, we apply it in every cutting-plane iteration after solving the SDP relaxation~\eqref{eq:SDP_rel_RLT} and before separating violated RLT inequalities~\eqref{RLT}.

\noindent
{Overall, our bounding procedure involves four steps:
\begin{enumerate}
    \item Box constraints computation by solving problems~\eqref{eq:boundxi} whenever the global upper bound has been updated.
    \item Solution of the basic SDP relaxation~\eqref{eq:our_basic}.
    \item Cutting-plane algorithm for adding violated RLT inequalities~\eqref{RLT} and solution of the corresponding SDP problem~\eqref{eq:SDP_rel_RLT}.
    \item Improvement of box constraints using marginals-based bound tightening.
\end{enumerate}
}

During the cutting-plane approach, we ensure that the sizes of SDPs remain manageable by removing (almost) inactive RLT cuts at each iteration. Moreover, we impose a minimum violation of any cut to be added to the relaxation and restrict the number of new cuts per iteration. We use the SDP solution at each iteration to apply the primal heuristic described in Section~\ref{sec:heuristic}. The cutting-plane approach is stopped if the relative change of the lower bound compared to the last iteration is smaller than a fixed threshold. If necessary, we then apply the branching strategy presented in Section~\ref{sec:branching}. In Section~\ref{sec:bound_comp}, we report a numerical comparison of different lower bounding approaches that confirms the effectiveness of our approach.

\subsection{Primal heuristic}\label{sec:heuristic}
An essential component of any branch-and-bound methodology is the incorporation of primal heuristics to find good primal feasible solutions. In our case, any feasible solution of~\eqref{eq:orig_prob_balancing} yields an upper bound. Since Problem~\eqref{eq:orig_prob_balancing} involves continuous variables only, finding high-quality upper bounds can be difficult and time-consuming. However, producing nearly optimal upper bounds early in the branch-and-bound approach is particularly crucial for the overall efficiency of our approach. Firstly, solving SDPs is a relatively expensive task to do. Exploring fewer branch-and-bound nodes is always beneficial and leads to a noticeable speedup. Secondly, stronger upper bounds also allow us to compute tighter box constraints on each variable by (re-)solving the convex QCQPs~\eqref{eq:boundxi}. This again improves the RLT inequalities~\eqref{RLT} and eventually leads to stronger lower bounds, reducing the optimality gap. Thirdly, marginals-based bound tightening also uses the best-known upper bound, yielding stronger box constraints if better upper bounds are known.

Let $(\hat x,\hat X)$ be the optimal solution of~\eqref{eq:SDP_rel_RLT}. Unless $(\hat x,\hat X)$ forms a rank-one solution, i.e., $\hat X = \hat x \hat x^\top$ holds, $\hat x$ is not feasible for the non-convex problem~\eqref{eq:orig_prob_balancing}. Even if we neglect the balancing constraint~\eqref{eq:our_balancing}, we would most likely get a solution of poor quality by just projecting each entry $\hat x_i$ of $\hat x$ onto the feasible region.

However, it is possible to improve any solution $\bar x \in \mathbb{R}^n$ that is feasible for~\eqref{eq:orig_prob_balancing} by constructing a labeling vector $\bar y \in \{-1,+1\}^n$ with entries $\bar y_i = \operatorname{sign}(\bar x_i)$ for $i = 1,\dots,n$, and then solving the convex QP
\begin{equation}\label{eq:convex_qp}
\begin{aligned}
\min_{x} \quad & x^\top C x\\
\textrm{s.\,t.} \quad & \bar y_i x_i \geq 1, \qquad i = 1,\dots,n, \\
&  \frac{1}{(n-l)}\sum_{i=l+1}^n x_i = \frac{1}{l}\sum_{i=1}^l y_i, \\
& x \in \mathbb{R}^n. \\
\end{aligned}
\end{equation}
The optimal solution of~\eqref{eq:convex_qp} will be feasible for~\eqref{eq:orig_prob_balancing}, and it is the best solution one can get with respect to the fixed labeling vector $\bar y$. Solving the QP~\eqref{eq:convex_qp} is a non-trivial task within a heuristic. Still, it allows us to exploit the combinatorial structure of the problem: we now only have to find suitable labeling vectors with entries in $\{-1,+1\}$, which is much easier than directly finding feasible solutions with continuous entries.

Like suggested in~\cite{bai2016conic}, we exploit the optimal SDP solution $(\hat x,\hat X)$ of~\eqref{eq:SDP_rel_RLT} and first construct a labeling vector $\bar y$ with entries $\bar y_i = \operatorname{sign}(\hat x_i$) for $i=1,\dots,n$. We then solve the convex QP~\eqref{eq:convex_qp} with this labeling vector and obtain an upper bound on the non-convex problem~\eqref{eq:orig_prob_balancing}. However, it turns out that a solution found this way can often be improved. To do so, we use a two-opt local search approach, which we present below.

Suppose that $\bar x \in \mathbb{R}^n$ is feasible for~\eqref{eq:orig_prob_balancing}. The idea is to improve $\bar x$ by selecting two variables $x_i$ and $x_j$ with $i,j \in \{l+1,\dots,n\},\ i < j$, and then to solve Problem~\eqref{eq:orig_prob_balancing} with respect to $x_i$ and $x_j$ only, while keeping all other variables fixed at their values in $\bar x$. If doing so, Problem~\eqref{eq:orig_prob_balancing} reduces to
\begin{equation}\label{eq:two_opt}
\begin{aligned}
\min_{x_i, x_j} \quad & 
\begin{pmatrix}
x_i \\
x_j \\
\end{pmatrix}^\top
\begin{pmatrix}
C_{ii} & C_{ij} \\
C_{ij} & C_{jj} \\
\end{pmatrix}
\begin{pmatrix}
x_i \\
x_j \\
\end{pmatrix}
+ 2 \begin{pmatrix}
\eta_i \\
\eta_j \\
\end{pmatrix}^\top
\begin{pmatrix}
x_i \\
x_j \\
\end{pmatrix} \\
\textrm{s.\,t.} \quad & x_i^2 \geq 1, \\
& x_j^2 \geq 1, \\
& x_i + x_j = k, \\
& x_i, x_j \in \mathbb R, \\
\end{aligned}
\end{equation}
where $\eta_i = \sum_{k \neq i,j}^n C_{ki} \bar x_k$, $\eta_j = \sum_{k \neq i,j}^n C_{kj} \bar x_k$, and $k = \bar x_i + \bar x_j$. Problem~\eqref{eq:two_opt} still is a non-convex optimization problem, but we can solve it efficiently, as shown in the next proposition.\\
\begin{proposition}
    Problem~\eqref{eq:two_opt} admits a global minimum, and this minimum can be computed analytically.
\end{proposition}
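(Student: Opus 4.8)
The plan is to eliminate the equality constraint, reduce Problem~\eqref{eq:two_opt} to the minimization of a univariate quadratic over a closed set, and then invoke coercivity for existence together with a finite enumeration of candidate points for the analytic computation.

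First I would use the linear constraint $x_i + x_j = k$ to substitute $x_j = k - x_i$ and write the objective as a function of the single variable $t \coloneqq x_i$. Expanding, the objective becomes
\[
q(t) = a\,t^2 + b\,t + c,
\]
with $a = C_{ii} - 2C_{ij} + C_{jj}$ and scalars $b, c$ depending on $k$, $\eta_i$, $\eta_j$, and $C_{jj}$. The key observation is that $a > 0$: since $C = \tfrac12 K^{-1} \succ 0$, its $2 \times 2$ principal submatrix $\left(\begin{smallmatrix} C_{ii} & C_{ij} \\ C_{ij} & C_{jj}\end{smallmatrix}\right)$ is positive definite, and $a = (1,-1)\left(\begin{smallmatrix} C_{ii} & C_{ij} \\ C_{ij} & C_{jj}\end{smallmatrix}\right)(1,-1)^\top > 0$ because $(1,-1)^\top \neq 0$. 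Hence $q$ is a strictly convex, coercive quadratic with unique unconstrained minimizer $t^\star = -b/(2a)$.

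Next I would describe the feasible set of the reduced problem. The constraints $x_i^2 \ge 1$ and $x_j^2 = (k-x_i)^2 \ge 1$ translate into $t \in (-\infty,-1] \cup [1,\infty)$ and $t \in (-\infty,k-1]\cup[k+1,\infty)$, so the feasible set $F$ is the intersection of these two sets. Distributing the unions shows that $F$ is a union of at most four closed intervals whose endpoints all lie in $\{-1,1,k-1,k+1\}$; in particular $F$ is closed, and it is nonempty because the incumbent $(\bar x_i,\bar x_j)$ with $\bar x_i + \bar x_j = k$ is feasible by construction. Existence of a global minimum then follows from the Weierstrass theorem, since $q$ is continuous and coercive and $F$ is closed and nonempty.

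Finally, for the analytic computation I would argue that a minimizer must lie in the finite candidate set $\{t^\star,-1,1,k-1,k+1\} \cap F$. Indeed, restricted to any one of the intervals comprising $F$, the strictly convex quadratic $q$ is minimized either at $t^\star$ (if it lies in that interval) or at one of the interval's endpoints. Thus the global minimum is obtained by evaluating $q$ at each feasible candidate, keeping the smallest value, and recovering $x_j = k - x_i$ at the chosen point. Each candidate and each function value is available in closed form, which gives the claimed analytic solution. I expect the only delicate point to be the careful enumeration of the interval structure of $F$ as $k$ varies (including degenerate and unbounded cases); the strict convexity and the Weierstrass argument are then routine.
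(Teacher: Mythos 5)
Your proof is correct, and while it shares the paper's overall skeleton (eliminate the linear constraint, reduce to a univariate positive-definite quadratic, use coercivity for existence, then enumerate finitely many candidates), it justifies the candidate enumeration by a genuinely different argument. The paper's proof works through the KKT conditions of the reduced problem~\eqref{eq:two_opt}: it splits into four cases according to which of the constraints $x_j^2 \geq 1$ and $(k-x_j)^2 \geq 1$ are active, verifies LICQ and the sign of the multiplier in the single-active-constraint cases, and must invoke the Fritz John conditions in the degenerate case where both constraints are active (which forces $k \in \{-2,0,+2\}$ and LICQ fails). Your argument sidesteps multipliers and constraint qualifications entirely: you observe that the feasible set is a union of at most four closed intervals with finite endpoints in $\{-1,1,k-1,k+1\}$, and that a strictly convex quadratic restricted to any interval attains its minimum either at its unconstrained minimizer $t^\star = -b/(2a)$ or at an endpoint. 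This yields the same candidate set $\{t^\star, -1, 1, k-1, k+1\}$ filtered only by feasibility, which is a coarser but still finite and still valid filter; the paper's multiplier-sign checks prune a few candidates earlier but buy nothing essential. Your approach is arguably cleaner, and you also supply a detail the paper only asserts, namely $a = C_{ii} + C_{jj} - 2C_{ij} > 0$, which you derive from positive definiteness of the $2\times 2$ principal submatrix of $C = \tfrac{1}{2}K^{-1} \succ 0$.
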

\begin{proof}
We can eliminate the variable $x_i$ by substituting $x_i = k - x_j$. This gives us a univariate optimization problem of the form
\begin{equation}\label{univariate}
\begin{aligned}
\min_{x_j} \quad & ax_j^2 + bx_j + c \\
\textrm{s.\,t.} \quad & g_1(x_j)=x_j^2 \geq 1, \\
		& g_2(x_j)=(k-x_j)^2 \geq 1, \\
		& x_j \in \mathbb{R}, \\
\end{aligned}
\end{equation}
where $a = C_{ii} + C_{jj} - 2 C_{ij}>0, \ b = 2k (C_{ij} - C_{ii}) - 2 \eta_i + 2 \eta_j$, and $c = C_{ii} + 2k g_i$. The objective function is quadratic and positive definite. Therefore, it is coercive and admits a global minimum. Thus, the global minimum will be a point at which the minimum objective value among all candidates derived by necessary optimality conditions is attained. The Lagrangian of~\eqref{univariate} is given by
\begin{equation}
    \mathcal{L} (x_j;\lambda_1, \lambda_2) = ax_j^2 + bx_j + c - \lambda_1 (x_j^2 - 1) - \lambda_2 ((k-x_j)^2 - 1),
\end{equation}
where $\lambda_1$ and $\lambda_2$ are Lagrange multipliers corresponding to the inequality constraints. The KKT conditions are
\begin{align}
        2ax_j + b - 2\lambda_1 x_j + 2 \lambda_2 (k - x_j) &= 0, \label{dldx} \\
        \lambda_1 (x_j^2 - 1) &= 0, \label{compl1} \\
        \lambda_2 ((k - x_j)^2 - 1) &= 0, \label{compl2} \\
        x_j^2 &\geq 1, \label{constr1} \\
        (k-x_j)^2 &\geq 1, \label{constr2} \\
        \lambda_1,\lambda_2 &\geq 0. 
\end{align}
Given the two constraints, there are four cases:
\begin{enumerate}
    \item If $\lambda_1 = \lambda_2 = 0$, then no constraint is active. Then we get $x_j = -\frac{b}{2a}$ due to~\eqref{dldx}. Since this is the unconstrained global minimum, this solution is optimal if it is feasible, i.e., it satisfies~\eqref{constr1} and~\eqref{constr2}. 
    \item Suppose $\lambda_1 > 0$ and $\lambda_2 = 0$. We get $x_j \in \{-1,+1\}$ due to~\eqref{compl1}. Then the LICQ constraint qualification holds since $\frac{\partial g_1}{\partial x_j} = 2x_j\not=0 $. Moreover, solving~\eqref{dldx} for $\lambda_1$ yields $\lambda_1 = a + \frac{b}{2x_j} = a + \frac{b}{2}x_j$. For both candidates $x_j=-1$ and $x_j=+1$, we can check whether they are KKT points, i.e., $\lambda_1$ is nonnegative and~\eqref{constr2} holds.
    \item Suppose $\lambda_1 = 0$ and $\lambda_2 > 0$. We get $x_j \in \{k-1,k+1\}$ due to~\eqref{compl2}. Again, the LICQ constraint qualification holds since  $\frac{\partial g_2}{\partial x_j} = -2(k-x_j)\not=0 $. Solving~\eqref{dldx} for $\lambda_2$ yields $\lambda_2 = \frac{2ax_j + b}{2(k-x_j)}$. For both candidates, $x_j = k - 1$ and $x_j = k + 1$, we can check whether they are KKT points, i.e., $\lambda_2$ is nonnegative and~\eqref{constr1} holds.
    \item Suppose $\lambda_1 > 0$ and $\lambda_2 > 0$. We get $x_j \in \{-1,+1\}$ from~\eqref{compl1} and $x_j \in \{k-1,k+1\}$ from~\eqref{compl2}. This can only be true if $k \in \{-2,0,+2\}$. In this case, the point satisfies the Fritz John optimality conditions. Hence, it is a candidate for optimality.
\end{enumerate}
The global minimum of Problem~\eqref{eq:two_opt} is the candidate that achieves the minimum objective value.
\end{proof}
We apply the two-opt local search in the following way. We start with a feasible solution $\bar x \in \mathbb R^n$ of Problem~\eqref{eq:orig_prob_balancing}, and then loop through all possible pairs $\{i,j\}$ of unlabeled data points. For any pair, if the optimal solution of Problem~\eqref{eq:two_opt} is better than the solution $(\bar x_i,\bar x_j)$, we immediately update $\bar x$ and proceed with this updated solution. If all pairs have been tested and no improvement was found, we are done. Otherwise, we first update $\bar x$ by solving the convex QP~\eqref{eq:convex_qp} using the labeling vector $\bar y$ with entries $\bar y_i = \operatorname{sign}(\bar x_i)$ for $i = 1,\dots,n$, and then we check all pairs $\{i,j\}$ for any improvement again. {The pseudocode of this two-opt local search heuristic is given in Algorithm~\ref{alg:twoopt}.}

\begin{algorithm}[!ht]
\caption{Two-opt local search heuristic}
\label{alg:twoopt}
{
\textbf{Input}: Feasible solution $\bar x \in \mathbb R^n$ of Problem~\eqref{eq:orig_prob_balancing}. 
\begin{enumerate}[label*=\arabic*., nolistsep] 
\item For every pair $\{i,j\}$ of unlabeled data points:
\begin{enumerate}[label*=\arabic*., rightmargin=15pt, nolistsep]
    \item Compute an optimal solution $x_i,x_j$ of Problem~\eqref{eq:two_opt} with respect to $\bar{x}$ and the pair $\{i,j\}$.
    \item If the solution $x_i,x_j$ differs from $\bar{x}_i,\bar{x}_j$, then update $\bar{x}_i$ to $x_i$ and $\bar{x}_j$ to $x_j$.
\end{enumerate}
\item If $\bar{x}$ was updated in the last execution of \textit{Step 1}, then compute a new vector $\bar{x}$ by solving Problem~\eqref{eq:convex_qp} with respect to the labeling vector $\bar{y}$ with entries $\bar{y}_i = \operatorname{sign}(\bar{x}_i)$ for $i = 1, \dots, n$, and go to \textit{Step 1}.
\end{enumerate}
\textbf{Output}: Labeling vector $\bar{y}$ and upper bound $UB = \bar{x}^\top C \bar{x}$. 
}
\end{algorithm}

\subsection{Branching strategy}\label{sec:branching}
Problem~\eqref{eq:orig_prob_balancing} is a continuous optimization problem, but it is possible to enumerate all the solutions. Note that the Problem~\eqref{eq:orig_prob_balancing} becomes convex when all unlabeled data points have been labeled. Therefore, we select an unlabeled data point and derive two children corresponding to the two possible labels. In practice, labeling a point is equivalent to setting either the lower bound to $L_i=1$ ($y_i=1$) or the upper bound to $U_i=-1$ ($y_i=-1$). At a given node, some unlabeled points have already been assigned a label. This yields a binary enumeration tree where nodes are associated with a fixed partial labeling of the unlabeled dataset, and the two children correspond to the labeling of some new unlabeled point. 

The key question is how to select, at any branching step, a data point to label. We explore different options, and all of them exploit the solution of our SDP relaxation~\eqref{eq:SDP_rel_RLT} at the current node. To select interesting candidates, we do some preliminary considerations. Given the solution $(\bar x, \bar X)$ of the SDP relaxation at the end of the cutting-plane algorithm, we consider the labeling $\bar y_i=\sign(\bar x_i)$ for $i=l+1,\ldots,n$ and solve the convex QP~\eqref{eq:convex_qp}. Let $x^\star$ be the optimal solution of this problem.

We focus on the indices of unlabeled data points that satisfy $x^\star_i=1$ since they correspond to data points near the boundary of the decision function and, therefore, are more relevant. Moreover, we consider data points where the label assigned by the SDP solution appears to be undecided, i.e., $\bar x_i\in(-1,1)$, hoping to achieve a bound improvement in both child nodes. Indeed, we restrict the candidates for branching to the set
\begin{equation*}
    {\cal U} = \{i=l+1,\ldots,n \, \colon x^\star_i=1 \textrm{ and } \left\vert \bar x_i \right\vert < 1 \}.
\end{equation*}
To select the branching variable, we evaluate different measures that could help identify the variable leading to the largest bound improvement. We consider the approximation error, a quality measure that can be computed in various ways. In particular, we know that at the optimum $X_{ij}=x_ix_j$ must hold for all $i, j=1, \dots, n$. The goal then is to label the point $i$ that maximizes the approximation error, assuming that a larger approximation error indicates room for improvement on that variable. We consider four measures of the approximation error for variable $i\in {\cal U}$:
 \begin{align*}
     a_i^1 &= \sum_{j=1}^n \left(\bar x_i \bar x_j - \bar{X}_{ij}\right), \qquad  a_i^2 = \sum_{j=1}^n \left\vert \bar x_i \bar x_j - \bar X_{ij} \right\vert, \\
     a_i^3 &= \sum_{j=1}^n C_{ij}\left(\bar x_i \bar x_j - \bar X_{ij}\right), \qquad a_i^4 = \sum_{j=1}^n \left\vert C_{ij}\left(\bar x_i \bar x_j - \bar X_{ij} \right) \right\vert.
 \end{align*}
Another interesting measure of \enquote{undecidedness} is the box size around the variable $x_i$. Given that the point is unlabeled, we know that $L_i \leq -1$ and $U_i \geq 1$ must hold. If we select the data point $i$ for branching, then we have:
\begin{itemize}
\item In the child where we choose the label $y_i=1$, we set $L_i=1$, thereby reducing the box by the positive amount $1-L_i$.
\item In the child where we choose the label $y_i=-1$, we set $U_i=-1$, reducing the box by the positive amount $1+U_i$.
\end{itemize}
The intuition here is that the bound improvement is correlated with the reduction of the feasible set in the two children. Consequently, another interesting measure (the larger, the better) is defined as:
\begin{equation*}
b_i=\min_{i \in \mathcal{U}}\{1-L_i,1+U_i\}.
\end{equation*}
We then rank all data points in ${\cal U}$ in decreasing order with respect to $a_i^1$, $a_i^2$, $a_i^3$, $a_i^4$, and $b_i$, summing up all the positions in the ranking. The branching variable is selected as the one with the lowest cumulative score. 

The pseudocode of the overall branch-and-cut algorithm is given in Algorithm~\ref{alg:bbpseudocode} and uses all our ingredients. 
{
Steps 5.1--5.6 in Algorithm~\ref{alg:bbpseudocode} are executed sequentially for each node of the B\&B tree. Consequently, the complexity per node is dominated by solving the SDP relaxation, which has a complexity of $\mathcal{O}(n^{3.5})$ and is performed as many times as the number of cutting-plane iterations. Whenever the box constraints are updated, up to $2n$ convex QCQPs are solved using interior-point methods, each with a complexity of $\mathcal{O}(n^3)$.}

\begin{algorithm}[!ht]
\caption{Branch-and-cut algorithm for S3VM}
\label{alg:bbpseudocode}
\textbf{Input}: Kernel matrix $\bar K$, penalty parameters $C_l>0$ and $C_u>0$, labels $\{y_i\}_{i=1}^l$, upper bound $UB$, optimality tolerance $\varepsilon \geq 0$. 

\begin{enumerate}[label*=\arabic*., nolistsep] 
\item Set $L_i=1,~ U_i=\infty$ if $y_i=1$, $L_i=-\infty,~ U_i=-1$ if $y_i=-1$ for $i=1,\ldots,l$. Set $L_i=-\infty,~U_i=\infty$ for $i=l+1,\ldots,n$.
\item Compute valid box constraints $L_i$ and $U_i$ for $i=1,\ldots,n$.
    \item Let $P_0$ be the initial S3VM problem and set $\mathcal{Q} = \{P_0\}$. 
    \item Let $(y^u)^\star$ be a labeling with objective function value $v^\star = UB$.
    \item While $\mathcal{Q}$ is not empty:
    \begin{enumerate}[label*=\arabic*., rightmargin=15pt, nolistsep]
        \item Select and remove problem $P$ from $\mathcal{Q}$.
        \item Compute a lower bound $LB$ for problem $P$ by solving its SDP relaxation.
        \item If $v^\star<\infty$ and  $(v^\star - LB)/v^\star \leq \varepsilon$, go to \textit{Step 5}.
        \item From the solution of the SDP relaxation, get a labeling $y^u$ and an upper bound $UB$ {by running the two-opt local search heuristic in Algorithm~\ref{alg:twoopt}}. If $UB < v^\star$, then set $v^\star \leftarrow UB$, $(y^u)^\star \leftarrow y^u$,  update $L_i$ and $U_i$ for $i=1,\ldots,n$, apply bound tightening, and go to \textit{Step 5.2}.
        \item Search for violated RLT inequalities. If any are found, add them to the current SDP relaxation, and go to \textit{Step 5.2}. If no violated RLT inequality is found, or the lower bound did not improve, go to \textit{Step 5.6}.
        \item Select an unlabeled data point with index $i$ and partition problem $P$ into two subproblems: one with $y_i=-1$ and the other with $y_i = 1$. Add each subproblem to $\mathcal{Q}$ and go to \textit{Step 5}.
    \end{enumerate}
\end{enumerate}
\textbf{Output}: Labeling vector $(y^u)^\star$ with objective value $v^\star$.

\end{algorithm}

{
\begin{remark}
Algorithm~\ref{alg:bbpseudocode} converges in a finite number of steps, even for $\varepsilon=0$, due to the finiteness of the branch-and-bound tree. Each branch-and-bound node corresponds to a partial labeling of the dataset, and its two children represent labelings of different unlabeled points. The root of the tree contains only the original set of labeled examples. Each leaf in the tree represents a complete labeling of the dataset. Consequently, the problem is convex at the leaf nodes. This implies that Algorithm~\ref{alg:bbpseudocode} terminates after exploring at most $2^{n-l}$ nodes, independently of the quality of lower and upper bounds. The effect of our machinery is to significantly reduce the number of nodes required to achieve nearly optimal solutions (where $\epsilon >0$), as demonstrated by the computational results in Section~\ref{sec:results}.
\end{remark}}


\section{Ideal Kernel}\label{sec:ideal}
When solving S3VM problems, the main goal is identifying the underlying ground-truth labels. The S3VM objective function may yield a solution far from the ground truth if the clustering assumption is not met or if hyperparameters are inadequately selected, as discussed by Chapelle et al.~\cite{chapelle2008optimization}. Therefore, an interesting question arises: is there a kernel function that ensures that the S3VM objective function aligns with the ground truth while simultaneously making the SDP relaxation tight?
The natural candidate for such a function is the ideal kernel, which establishes a class-based similarity score capable of perfectly discriminating between both classes. However, its analytical expression remains unknown and is dependent on the dataset. Assume that the classification outcome is available, i.e., we know 
$\gamma=[y_1,\ldots,y_l,y^{gt}_{l+1},\ldots,y_n^{gt}]^\top$, where $y_i^{gt}$ is the ground-truth label for the unlabeled data points. An example of an ideal kernel function is $\bar{K}=\gamma\gamma^\top$, where $\bar k(x_i,x_j)=-1 $ if $\gamma_i\not=\gamma_j$ and $\bar{k}(x_i,x_j)=1$ if $\gamma_i=\gamma_j$. This ensures that within-class distances in the feature space are zero and between-class distances are positive. Indeed, by applying the kernel trick, one can easily verify that
\begin{equation*}
\|\phi(x_i)-\phi(x_j)\|^2 = \bar k(x_i,x_i)-2\bar k(x_i,x_j)+\bar k(x_j,x_j)=\left\{\begin{array}{ll}
0     & \mbox{if } \gamma_i=\gamma_j, \\
 4    & \mbox{if } \gamma_i\not=\gamma_j.
\end{array}\right.
\end{equation*}
If we set $\bar K=\gamma\gamma^T$, we can prove that for any positive values of the penalty parameters $C_l$ and $C_u$, the global minimum of Problem~\eqref{eq:orig_prob} recovers the ground-truth classification. Furthermore, the SDP relaxation~\eqref{eq:SDP_basicrel} is tight. \\
\begin{proposition}\label{prop:ideal}
Given $C_l,C_u>0$, let $\bar{K}=\gamma\gamma^\top$, then the global minimum of Problem~\eqref{eq:orig_prob} is $v^\star=\gamma$, and $(\gamma,\gamma\gamma^\top)$ is the optimal solution of Problem~\eqref{eq:SDP_basicrel}.
\end{proposition}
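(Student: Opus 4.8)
The plan is to reduce everything to a closed form for $K^{-1}$ and then exhibit a lower bound on the objective that $\gamma$ attains. First I would apply the Sherman--Morrison formula to $K = D + \gamma\gamma^\top$ to get
\[
K^{-1} = D^{-1} - \frac{D^{-1}\gamma\gamma^\top D^{-1}}{1 + s}, \qquad s \coloneqq \gamma^\top D^{-1}\gamma = \sum_{i=1}^n D_{ii}^{-1},
\]
where the last equality uses $\gamma_i^2 = 1$; since $D \succ 0$ we have $s > 0$ and $K^{-1}$ is well defined. Substituting $v = \gamma$ gives $\gamma^\top K^{-1}\gamma = s - s^2/(1+s) = s/(1+s)$, so the candidate optimal value is $\tfrac12\, s/(1+s)$. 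I would then check feasibility of $\gamma$ for \eqref{eq:orig_prob}: for labeled indices $y_i\gamma_i = y_i^2 = 1$, and for unlabeled indices $\gamma_i^2 = 1$, so all constraints hold (with equality).

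The core of the argument is a matching lower bound valid over the entire non-convex feasible set. Introducing the weighted inner product $\langle a,b\rangle_D \coloneqq a^\top D^{-1} b$ (a genuine inner product since $D^{-1}\succ 0$), the formula above reads $v^\top K^{-1} v = \|v\|_D^2 - \langle v,\gamma\rangle_D^2/(1+s)$. Cauchy--Schwarz gives $\langle v,\gamma\rangle_D^2 \le \|v\|_D^2\,\|\gamma\|_D^2 = s\,\|v\|_D^2$, whence $v^\top K^{-1} v \ge \|v\|_D^2/(1+s)$. Because every feasible point satisfies $v_i^2 \ge 1$ for all $i$ (for labeled indices this follows from $y_iv_i\ge 1$ and $|y_i|=1$), we get $\|v\|_D^2 = \sum_i D_{ii}^{-1} v_i^2 \ge \sum_i D_{ii}^{-1} = s$, and therefore $v^\top K^{-1} v \ge s/(1+s)$. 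This equals the value attained at $\gamma$, proving global optimality. Tracking the equality cases then yields uniqueness: Cauchy--Schwarz forces $v = c\gamma$, while $\|v\|_D^2 = s$ forces $v_i^2 = 1$, so $c = \pm 1$, and the labeled constraints rule out $c = -1$, leaving $v^\star = \gamma$.

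For the SDP claim I would first verify that $(\gamma,\gamma\gamma^\top)$ is feasible for \eqref{eq:SDP_basicrel}: the diagonal of $\gamma\gamma^\top$ equals $e$, the labeled constraints hold as above, and $\bar V = \begin{pmatrix}\gamma\\1\end{pmatrix}\begin{pmatrix}\gamma\\1\end{pmatrix}^\top$ is rank-one and positive semidefinite. Its objective value is $\tfrac12\langle K^{-1},\gamma\gamma^\top\rangle = \tfrac12\,\gamma^\top K^{-1}\gamma = \tfrac12\, s/(1+s)$, exactly the optimal value of \eqref{eq:orig_prob}. Since \eqref{eq:SDP_basicrel} is a relaxation of \eqref{eq:orig_prob}, its optimal value cannot exceed $\tfrac12\, s/(1+s)$; as a feasible point already attains this value, $(\gamma,\gamma\gamma^\top)$ is optimal and the relaxation is tight.

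I expect the main obstacle to be the global optimality claim in the second paragraph: the feasible set of \eqref{eq:orig_prob} is non-convex, so a direct KKT analysis would only certify local optimality or force an enumeration over all sign patterns of the unlabeled variables. The weighted Cauchy--Schwarz bound circumvents this by producing a single lower bound valid simultaneously over all sign patterns, which is the cleanest route I see; the remaining pieces (Sherman--Morrison, the feasibility checks, and the relaxation inequality for the SDP) are routine.
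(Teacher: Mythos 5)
Your treatment of the non-convex problem~\eqref{eq:orig_prob} is correct and follows essentially the same route as the paper: Sherman--Morrison for $K^{-1}$, then a Cauchy--Schwarz bound combined with $v_i^2 \ge 1$ to produce the matching lower bound $s/(1+s)$ attained at $\gamma$. Your version is in fact cleaner: the paper inserts a KKT analysis that is not needed once the Cauchy--Schwarz bound is in hand, it leaves the step $\|v\|_D^2 \ge s$ largely implicit, and it does not argue uniqueness, which you do. The genuine gap is in the SDP claim. The inference ``the relaxation's optimal value cannot exceed $\tfrac12 s/(1+s)$; a feasible point attains this value; hence that point is optimal and the relaxation is tight'' is backwards. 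For a minimization problem, both of your facts bound the SDP optimum from \emph{above}; a priori the SDP could have a strictly smaller optimal value, in which case $(\gamma,\gamma\gamma^\top)$ would be feasible but suboptimal and the relaxation would not be tight. (Toy example: minimizing $x$ over $[0,1]$ is a relaxation of minimizing $x$ over $\{1\}$; the point $x=1$ is feasible and attains the original optimum, yet it is not optimal for the relaxation.) What is missing is a lower bound on the SDP objective valid over the \emph{entire} SDP feasible set.

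The gap is fixable by promoting your weighted Cauchy--Schwarz argument to the matrix level. For any feasible $(v,V)$ of~\eqref{eq:SDP_basicrel}, the Schur complement gives $V \succeq vv^\top \succeq 0$. Set $M \coloneqq D^{-1/2} V D^{-1/2} \succeq 0$ and $\beta \coloneqq D^{-1/2}\gamma$, so that $\|\beta\|^2 = s$. Then
\begin{equation*}
\langle K^{-1}, V\rangle
= \operatorname{tr}(M) - \frac{\beta^\top M \beta}{1+s}
\;\ge\; \operatorname{tr}(M) - \frac{s\,\operatorname{tr}(M)}{1+s}
\;=\; \frac{\operatorname{tr}(M)}{1+s}
\;\ge\; \frac{s}{1+s},
\end{equation*}
where the first inequality uses $\beta^\top M\beta \le \lambda_{\max}(M)\,\|\beta\|^2 \le \operatorname{tr}(M)\, s$ (valid because $M \succeq 0$), and the second uses $\operatorname{tr}(M) = \sum_i D_{ii}^{-1} V_{ii} \ge \sum_i D_{ii}^{-1} = s$, which follows from $\operatorname{diag}(V) \ge e$. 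Since $(\gamma,\gamma\gamma^\top)$ attains this bound, it is optimal and the relaxation is tight. It is worth noting that the paper's own proof has exactly the same defect---it too only verifies feasibility of $(\gamma,\gamma\gamma^\top)$ and computes its objective value before declaring the relaxation optimal---so you have faithfully reproduced the published reasoning, gap included; but as a self-contained proof of the proposition, the matching lower bound above must be supplied.
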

\begin{proof}
Assume that $\bar{K}=\gamma\gamma^\top$. Then the cost matrix of Problem~\eqref{eq:orig_prob} can be explicitly computed by the Sherman-Morrison formula
\begin{equation*}
    C = \left(D+\gamma\gamma^\top \right)^{-1} = D^{-1}-\frac{D^{-1}\gamma\gamma^\top D^{-1}}{1+\gamma^\top D^{-1}\gamma},
\end{equation*}
where 
\begin{equation*}
    D=\begin{bmatrix}
    \frac{1}{2C_l}I_l & 0_{l \times (n-l)} \cr
       0_{(n-l) \times l}& \frac{1}{2C_u}  I_{n-l} 
\end{bmatrix}, \quad
D^{-1}=\begin{bmatrix}
    2C_lI_l & 0_{l \times (n-l)} \cr
       0_{(n-l) \times l}& 2C_u  I_{n-l}
\end{bmatrix}.
\end{equation*}
\noindent\newline
Indeed, the objective function of Problem~\eqref{eq:orig_prob} can be expressed as 
\begin{equation*}
    v^\top \left(D+\gamma\gamma^\top \right)^{-1}v = v^\top D^{-1}v^\top - \frac{v^\top D^{-1}\gamma\gamma^\top D^{-1}v}{1+\gamma^\top D^{-1}\gamma}.
\end{equation*}
By setting $z = \sqrt{D^{-1}}v$ and $\beta = \sqrt{D^{-1}}\gamma $, Problem~\eqref{eq:orig_prob} can be rewritten as:
\begin{equation}\label{eq:scaleprob}
\begin{aligned}
\min_{z} \quad & \|z\|^2-\frac{ (z^\top \beta)^2}{1+\beta^\top \beta} \\
\textrm{s.\,t.} \quad & y_i z_i \geq \sqrt{2C_l}, \qquad i = 1,\dots,l, \\
& z_i^2 \geq 2C_u, \qquad i = l + 1, \dots, n,\\
& z \in \mathbb{R}^n. \\
\end{aligned}
\end{equation}
Let $\lambda \in \mathbb{R}^n$ be the vector of multipliers associated with the inequality constraints. Then, we can write the (necessary) KKT conditions for Problem~\eqref{eq:scaleprob} as
\begin{align}\label{eq:KKTscaleprob1}
    2z_i-2\frac{z^\top \beta}{1+\|\beta\|^2}\beta -\lambda_iy_i &= 0, \quad i=1,\dots,l,\\ \label{eq:KKTscaleprob2}
    2z_i-2\frac{z^\top \beta}{1+\|\beta\|^2}\beta -2\lambda_iz_i &= 0, \quad i=l+1,\dots,n,\\\label{eq:KKTscaleprob3}
    \lambda_i\left(y_iz_i - \sqrt{2C_l}\right) &= 0, \quad i=1,\dots,l,\\  \label{eq:KKTscaleprob4}
    \lambda_i\left(z_i^2 - 2C_u,\right) &= 0, \quad i=l+1,\ldots,n,\\ \label{eq:KKTscaleprob5}
    \lambda_i &\ge 0, \quad i=1,\ldots,n. 
\end{align}
Now, consider the point $\hat z$ such that all the constraints in~\eqref{eq:scaleprob} are active, that is 
\begin{align*}
    \hat z_i & = y_i \sqrt{2C_l}, \quad i = 1,\dots,l \\
    \hat z_i^2 & = 2C_u, \quad i = l + 1,\dots, n.
\end{align*}
The second equation implies that either $\hat z_i=\sqrt{2C_u}$ or $\hat z_i=-\sqrt{2C_u}$ for $i=l+1,\ldots,n$, and we have that $\|\hat{z}\|^2=\|{\beta}\|^2$.
By~\eqref{eq:scaleprob}, the objective function value becomes
\begin{equation*}
\|\hat z\|^2-\frac{ (\beta^\top \hat z)^2}{1+\|\beta\|^2}  = \|\beta\|^2-\frac{(\sum_{i=1}^n\hat z_i\beta_i)^2}{1+\|\beta\|^2},
\end{equation*}
which is minimized when the term $\sum_{i=1}^n\hat z_i\beta_i$ is maximized. The term is maximal when $\hat z_i$ and $\beta_i$ have the same sign, meaning that $\hat z_i=y_i^{gt}\sqrt{2C_u}=\beta_i$. Then, the objective function value in $\hat z$ is 
\begin{equation*}
\|\hat z\|^2-\frac{ (\beta^\top \hat z)^2}{1+\|\beta\|^2} = \frac{ 1}{1+\beta^\top\beta}. 
\end{equation*}
By setting $\hat z = \beta$ in~\eqref{eq:KKTscaleprob1}--\eqref{eq:KKTscaleprob2}, we get 
\begin{align*}
    y_i\lambda_i = 2\beta_i-2\frac{\|\beta\|^2}{1+\|\beta\|^2}\beta_i, \quad  i=1,\ldots,l,\\
   2\beta_i^2-2\frac{\|\beta\|^2}{1+\beta^\top\beta}\beta_i^2 -2\lambda_i\beta_i^2=0, \quad i=l+1,\ldots,n,
\end{align*}
where the second equation is obtained by multiplying~\eqref{eq:KKTscaleprob2} by $\hat z_i$. Then, we have 
\begin{align*}
    \hat\lambda_i &= \frac{2\sqrt{2}C_l}{1+\|\beta\|^2}>0, \quad i=1,\ldots,l,\\
    \hat\lambda_i &= \frac{2\sqrt{2}C_u}{1+\|\beta\|^2}>0, \quad i=l+1,\ldots,n,
\end{align*}
which implies that the pair $(\hat z,\hat \lambda)$ is a KKT point. Looking at the objective function of Problem~\eqref{eq:scaleprob} and using the Cauchy-Schwartz inequality $(z^\top\beta)^2\le \|z\|^2\|\beta\|^2$, we get 
\begin{equation*}
\displaystyle f(z) = \|z\|^2-\frac{ (\beta^\top z)^2}{1+\beta^\top \beta}\ge  \frac{\|z\|^2}{1+\|\beta\|^2}.
\end{equation*}
At $\hat z$, we get $f(\hat z)=\frac{\|\hat z\|^2}{1+\|\beta\|^2}$, so the minimum is achieved, and the point is a global minimum of Problem~\eqref{eq:scaleprob}.

Finally, it remains to show that the SDP relaxation~\eqref{eq:our_basic} is tight. Plugging the ideal kernel into the objective function of~\eqref{eq:our_basic}, we get  
\begin{equation}\label{eq:SDP_basicrel_ideal}
\begin{aligned}
\min_{\bar{X}} \quad &  \left\langle  D^{-1}-\frac{D^{-1}\gamma\gamma^TD^{-1}}{1+\gamma^TD^{-1}\gamma}, X \right\rangle \\
\textrm{s.\,t.} \quad & L_i \leq x_i \leq U_i, \qquad i = 1,\dots,n, \\
& 1 \leq X_{ii} \leq \max\{ L_i^2, U_i^2 \}, \qquad i = 1,\dots,n, \\
& \bar{X} = \begin{pmatrix}
		X & x\\
		x^\top & 1\\
		\end{pmatrix} \succeq 0, \ x \in \mathbb{R}^n, \ X \in \mathbb{S}^n.
\end{aligned}
\end{equation}
We know that the point $(\gamma, \gamma\gamma^\top)$ is feasible for Problem~\eqref{eq:SDP_basicrel_ideal}, and that the objective function satisfies
\begin{equation*}
    \left \langle  D^{-1}-\frac{D^{-1}\gamma\gamma^\top D^{-1}}{1+\gamma^TD^{-1}\gamma}, \gamma\gamma^\top \right\rangle =\gamma D^{-1}\gamma -\frac{(\gamma^\top D^{-1}\gamma)^2 }{1+\gamma^\top D^{-1}\gamma} = \frac{\|\hat z\|^2}{1+\|\beta\|^2},
\end{equation*}
where the last equality derives from $\hat z = \beta = \sqrt{D^{-1}\gamma}$. Therefore, the relaxation is optimal.
\end{proof}

Although Proposition~\ref{prop:ideal} provides theoretical insights into the recovery properties of the SDP relaxation~\eqref{eq:our_basic}, the ideal kernel is not available in practice. However, the concept of an ideal kernel is useful because it defines how a kernel should be in order to achieve perfect classification. 
Indeed, we can expect a strong correlation between the objective function of the S3VM problem and the ground-truth labels, given that the kernel function is \enquote{close} to a kernel that effectively discriminates between the two classes.
In this setting, the SDP relaxation~\eqref{eq:our_basic} is expected to provide a strong bound that is further improved when RLT inequalities~\eqref{RLT} are added. 

The theoretical result linking bound quality to kernel effectiveness is supported by the numerical results in Section~\ref{sec:bbres}. Indeed, the only instances where we have huge gaps correspond to a wrong choice of the kernel function.

\section{Numerical Results}\label{sec:results}
This section provides the implementation details, outlines the details of the experimental setup, and thoroughly analyzes the computational results.

\subsection{Implementation details}
Our SDP-based B\&B algorithm is implemented in {\tt Julia} (version 1.9.4), and {\tt JuMP}~\cite{lubin2023jump} is used to formulate all occurring optimization problems. We use {\tt MOSEK} (version 10.1.8) to solve SDPs within our bounding routine and {\tt Gurobi} (version 10.0) to solve QCQPs for estimating box constraints. All experiments are performed on a machine with an Intel(R) Core(TM) i7-12700H processor with 14 cores clocked at 3.50 GHz, 16 GB of RAM, and Ubuntu 22.04 as the operating system. 
As for the cutting-plane approach, we add up to $5n$ of the most violated RLT inequalities at each iteration. In our numerical tests, we set the tolerance for checking the violation to $10^{-2}$. Furthermore, the tolerance for removing inactive cuts is set to $10^{-4}$. We stop the cutting-plane procedure when the relative difference between consecutive lower bounds is less than $10^{-3}$ and explore the B\&B tree using the best-first search strategy.

As an optimality measure, we use the percentage gap computed as $\varepsilon = ((UB - LB) / UB) \times 100$, where $UB$ is the best known upper bound and $LB$ is the lower bound. {Although our branch-and-bound algorithm is exact and capable of producing optimality certificates for any given $\varepsilon > 0$ in finite time, we stop our algorithm when $\varepsilon$ is less than $0.1\%$. This allows us to obtain nearly optimal solutions. Moreover, the remaining gap of $0.1\%$ is negligible in machine learning applications.} 

To enhance reproducibility, the source code has been made publicly available through the link \texttt{https://github.com/jschwiddessen/SDP-S3VM}. This repository provides all the tools to verify, build upon, and improve our global S3VM solver.

\subsection{Test instances}
To assess the effectiveness of our proposed branch-and-cut algorithm, we test it on various binary classification datasets. More precisely, we consider three artificial datasets, $\texttt{art\_100}$, $\texttt{art\_150}$, and $\texttt{2moons}$ and several real-world datasets taken from the literature\footnote{https://archive.ics.uci.edu/}. Table~\ref{tab:datasets} provides details on these datasets, including the number of data points ($n$), the number of features ($d$), and the total number of positive and negative examples, $(\# +1)$ and $(\# -1)$ respectively. We standardize the features such that each feature has zero mean and unit standard deviation. From each dataset, we derive different instances for our problem. Specifically, we randomly select a percentage $p \in \{10\%, 20\%, 30\%\}$ of data points to build the set of labeled examples {while preserving the percentage of samples for each class}. All the remaining points are treated as unlabeled. We consider three different random seeds for selecting the labeled data points to diversify the set of labeled examples. Indeed, the quality of the labeled data points plays a crucial role in guiding the optimization process and shaping the decision boundary between different classes. 


\begin{table}
\footnotesize
\centering
\caption{Characteristics of the tested datasets.}
\begin{tabular}{lcccc}
\toprule
Dataset   &  $n$   &  $d$ & $\# +1$ & $\# -1$ \\
\midrule
 $\texttt{art\_100}$ & 100 & 2&50 & 50\\ 
 $\texttt{art\_150}$ & 150 & 2& 77 & 73\\
 $\texttt{connectionist}$ & 208 & 60 & 111 & 97\\
 $\texttt{heart}$ &270 & 13 &120 &150\\
 $\texttt{2moons}$ & 300 &2 & 150& 150\\
 $\texttt{ionoshpere}$ & 351 & 33 &126 & 225\\
 $\texttt{PowerCons}$ & 360 & 144 & 180 & 180\\
 $\texttt{GunPoint}$ & 451 & 150 & 223 & 228\\
 $\texttt{arrhythmia}$ & 452 &191 &207 &245\\
 $\texttt{musk}$ & 476 & 166 & 207 & 269\\
 $\texttt{wdbc}$ & 569 & 30 & 357 & 212\\
\bottomrule
\end{tabular}
\label{tab:datasets}
\end{table}

\subsection{Lower bound comparison}\label{sec:bound_comp}
In this section, we conduct a comparative analysis between our lower bound presented in Section~\ref{sec:lb_bt} and the ones derived from the convex relaxations described in Section~\ref{sec:conv_rel}. Specifically, the following relaxations are considered:
\begin{enumerate}
    \item \textit{QP:} The standard QP relaxation obtained by solving Problem~\eqref{eq:orig_prob} without the non-convex constraints $v_i^2\ge 1$ for $i=1,\ldots,n$.
    \item \textit{QP-L:} The QP relaxation~\eqref{eq:qp_pen_bound} where violations of the non-convex constraints are penalized in the objective function by means of Lagrangian multipliers. The multipliers are estimated by solving the auxiliary SDP~\eqref{eq:qp_pen_bound_aux}.
    \item \textit{SDP:} The SDP relaxation~\eqref{eq:SDP_basicrel}.
    \item \textit{SDP-RLT:} The SDP relaxation~\eqref{eq:SDP_rel_RLT} without the balancing constraint~\eqref{eq:our_balancing}, where we apply bound tightening and add violated RLT cuts using our cutting-plane strategy. Box constraints $L_i \leq x_i \leq U_i$ are estimated by solving convex QCQPs of type~\eqref{eq:boundxi} for all $i=1, \dots, n$. The computation of a valid upper bound $UB$ required for Problem~\eqref{eq:boundxi} involves optimizing a supervised SVM only on the labeled data points and classifying the unlabeled data points. Thus, using this SVM labeling, $UB$ is determined as the solution of~\eqref{eq:convex_qp} without the balancing constraint~\eqref{eq:or_balancing}.
\end{enumerate}
Note that, for this set of experiments, we neglect the balancing constraint~\eqref{eq:our_balancing} to stick with the original convex relaxations described in the literature. Since incorporating the balancing constraint involves adding only a single linear constraint, we expect no differences in terms of bound comparison or computational time.
Computational results for the DNN presented in~\eqref{eq:DNN_large} are omitted due to long computing times and memory requirements, even for small-scale instances. Additionally, based on computational findings in~\cite{bai2016conic}, the DNN relaxation yields a marginally improved bound compared to the standard SDP but at significantly higher computational costs. Given that branch-and-bound algorithms depend on both strong and computationally efficient bounds, further computations using the DNN relaxation~\eqref{eq:DNN_large} are not pursued, as it is impractical to integrate it into any branch-and-bound algorithm. 

We select the radial basis function (RBF) kernel for all instances, defined as $K_{ij} = \textrm{exp}(-\gamma \|x_i - x_j\|^2)$, where $x_i, x_j \in \mathbb{R}^d$ represent feature vectors in the input space. Here, we set $\gamma=1/d$, $C_l = 1$, and $C_u=0.2 \cdot (l/(n-l)) \cdot C_l$. We average the results across the three seeds for each dataset and each percentage of labeled data points. Table~\ref{tab:bound} presents, for each relaxation, the number of labeled data points $l$, the number of unlabeled data points $n-l$, the percentage gap with respect to the upper bound $UB$, and the computational time in seconds. Additionally, for the SDP-RLT relaxation, Table~\ref{tab:bound} includes the \enquote{Time Box} column, indicating the average time spent in seconds for computing box constraints, and the \enquote{Iter} column, representing the average number of performed cutting-plane iterations.
Table~\ref{tab:bound} shows the superiority of the lower bound produced by SDP-RLT. While the bounds obtained from QP, QP-L, and SDP are faster to compute, they exhibit notably larger gaps. The substantial improvement in the lower bound achieved through RLT cuts justifies the time invested in computing box constraints. Across all instances, the average gap is less than 0.5\%, with only one instance exceeding 0.66\%. Notably, SDP-RLT attains a zero gap in 35 out of 81 instances and maintains gaps below 0.01\% in 46 instances (57\% of all instances). This indicates that we successfully solve more than half of the instances at the root node with this standard parameter configuration. 

\begin{sidewaystable}
\caption{For each relaxation, we present the average percentage gap (Gap [\%]) and the average computational time in seconds (Time [s]). This analysis is conducted for varying percentages of labeled data points, where $l$ is set to be 10\%, 20\%, and 30\% of the total number of data points, $n$. Additionally, for the SDP-RLT relaxation, we report the average time required for computing box constraints (Time Box [s]) and the average number of cutting-plane iterations (Iter).}
\label{tab:bound}
\begin{tabular}{lcc|cc|cc|cc|cccc}
\toprule
& & & \multicolumn{2}{c|}{QP} & \multicolumn{2}{c|}{QP-L} & \multicolumn{2}{c|}{SDP} & \multicolumn{3}{c}{SDP-RLT}\\
Instance	&	$l$	&	$n-l$	&	Gap [\%]	&	Time [s]	&	Gap [\%]	&	Time [s]	&	Gap [\%]	&	Time [s]	&	Time Box [s]	&	Gap [\%]	&	Time [s]	&	Iter\\
\midrule
2moons 	&	30	&	270	&	10.70	&	0.09	&	10.60	&	0.58	&	3.59	&	0.57	&	11.86	&	0.00	&	7.57	&	3.00	\\
2moons 	&	60	&	240	&	7.97	&	0.09	&	9.12	&	0.64	&	3.41	&	0.56	&	12.45	&	0.00	&	7.35	&	3.00	\\
2moons 	&	90	&	210	&	7.86	&	0.09	&	11.12	&	0.59	&	4.04	&	0.54	&	11.12	&	0.00	&	7.31	&	3.00	\\
arrhythmia 	&	44	&	408	&	19.83	&	0.13	&	10.27	&	1.95	&	9.81	&	1.74	&	45.98	&	0.42	&	115.82	&	6.00	\\
arrhythmia 	&	90	&	362	&	18.41	&	0.14	&	11.66	&	1.76	&	11.38	&	1.71	&	43.54	&	0.41	&	97.73	&	5.67	\\
arrhythmia 	&	135	&	317	&	16.57	&	0.14	&	11.08	&	1.79	&	10.69	&	1.77	&	41.20	&	0.22	&	61.59	&	5.00	\\
art100 	&	10	&	90	&	13.80	&	0.00	&	10.82	&	0.06	&	1.89	&	0.06	&	0.48	&	0.03	&	0.70	&	3.00	\\
art100 	&	20	&	80	&	14.17	&	0.00	&	16.31	&	0.06	&	1.12	&	0.06	&	0.51	&	0.00	&	0.58	&	3.00	\\
art100 	&	30	&	70	&	10.01	&	0.01	&	10.99	&	0.06	&	3.18	&	0.07	&	0.48	&	0.01	&	0.67	&	3.00	\\
art150 	&	14	&	136	&	14.01	&	0.05	&	11.80	&	0.18	&	3.15	&	0.13	&	1.30	&	0.04	&	1.44	&	3.00	\\
art150 	&	29	&	121	&	11.52	&	0.07	&	12.67	&	0.22	&	4.44	&	0.15	&	1.59	&	0.00	&	1.69	&	3.00	\\
art150 	&	44	&	106	&	10.19	&	0.05	&	16.36	&	0.18	&	5.31	&	0.15	&	1.32	&	0.01	&	1.38	&	3.00	\\
connectionist 	&	20	&	188	&	21.40	&	0.08	&	7.93	&	0.29	&	6.39	&	0.27	&	3.21	&	0.19	&	6.13	&	4.00	\\
connectionist 	&	41	&	167	&	19.27	&	0.08	&	10.17	&	0.33	&	8.40	&	0.27	&	3.09	&	0.16	&	9.84	&	4.67	\\
connection 	&	62	&	146	&	18.29	&	0.07	&	11.54	&	0.35	&	9.98	&	0.27	&	3.05	&	0.45	&	9.03	&	4.67	\\
GunPoint 	&	44	&	407	&	10.99	&	0.15	&	8.81	&	1.60	&	7.90	&	1.54	&	47.15	&	0.00	&	57.56	&	4.00	\\
GunPoint 	&	89	&	362	&	10.39	&	0.16	&	9.55	&	1.78	&	8.58	&	1.55	&	46.59	&	0.04	&	55.44	&	4.00	\\
GunPoint 	&	134	&	317	&	10.36	&	0.14	&	9.99	&	1.82	&	9.02	&	1.56	&	43.90	&	0.01	&	50.60	&	4.00	\\
heart 	&	27	&	243	&	19.68	&	0.07	&	8.06	&	0.45	&	5.84	&	0.39	&	6.92	&	0.22	&	10.36	&	4.00	\\
heart 	&	54	&	216	&	15.90	&	0.09	&	9.48	&	0.49	&	6.96	&	0.38	&	6.96	&	0.08	&	13.93	&	4.33	\\
heart 	&	81	&	189	&	14.57	&	0.08	&	9.79	&	0.50	&	7.65	&	0.38	&	6.37	&	0.15	&	12.05	&	4.33	\\
ionosphere 	&	34	&	317	&	19.90	&	0.12	&	7.31	&	1.06	&	3.85	&	0.91	&	19.84	&	0.66	&	19.53	&	3.67	\\
ionosphere 	&	70	&	281	&	16.67	&	0.17	&	6.22	&	0.98	&	3.87	&	0.93	&	19.67	&	0.01	&	20.73	&	3.33	\\
ionosphere 	&	104	&	247	&	16.30	&	0.13	&	7.27	&	0.98	&	4.97	&	0.90	&	17.98	&	0.00	&	27.77	&	4.00	\\
PowerCons 	&	36	&	324	&	18.83	&	0.13	&	13.31	&	1.10	&	5.52	&	0.95	&	21.80	&	0.04	&	22.79	&	3.67	\\
PowerCons 	&	72	&	288	&	17.04	&	0.11	&	14.66	&	1.00	&	6.50	&	0.88	&	19.12	&	0.01	&	26.26	&	4.00	\\
PowerCons	&	108	&	252	&	15.07	&	0.12	&	14.11	&	1.00	&	6.52	&	0.90	&	18.87	&	0.01	&	28.53	&	4.00	\\
\bottomrule
\end{tabular} 
\end{sidewaystable}

\subsection{Branch-and-cut results}\label{sec:bbres}
In this section, we report numerical experiments of the overall branch-and-cut approach. To the best of our knowledge, the only exact algorithms for L2-norm S3VMs are the branch-and-bound methods introduced in~\cite{chapelle2006branch} and~\cite{tian2017new}. Moreover, recent versions of Gurobi incorporate solvers for non-convex QCQPs and can solve problems of type~\eqref{eq:orig_prob_balancing}. We specifically focus on comparing against Gurobi, as the branch-and-bound method in~\cite{tian2017new} uses the relaxation in~\eqref{eq:qp_pen_bound} for the bounding routine, which has been demonstrated in Section~\ref{sec:bound_comp} to be considerably weaker than our SDP-RLT relaxation. Moreover, the branch-and-bound approach proposed in~\cite{chapelle2006branch} is only suitable for small instances ($n \leq 50$) since it relies on analytic bounds that are notably weak and require the exploration of a large portion of the branch-and-bound tree.

Following the related literature, we perform a $k$-fold cross-validation to determine the hyperparameters on only the set of labeled data points. By cross-validation, we choose the kernel function and the penalty parameter $C_l$ for labeled data points. We set $k=10$ for all datasets, except for $\texttt{art100}$ and $\texttt{art150}$, where we set $k=2$ due to the limited number of labeled data points. The penalty parameter for unlabeled data points $C_u$ is set to $0.2 \cdot (l/(n-l)) \cdot C_l$. The factor $l/(n-l)$ balances the contribution of labeled and unlabeled data points, and we further diminish the influence of unlabeled data points by the factor $0.2$ to give more weight to labeled data points.

The first set of experiments aims to understand how Gurobi behaves on different instances. We restrict to the smaller ones, that is, $n\leq 300$, and we set a time limit of 3600 seconds. In Table~\ref{tab:gurobi_avg}, we report the instance name, the number of labeled ($l$), and unlabeled data points ($n-l$). Then, for both Gurobi and our solver (SDP-S3VM), we report the average percentage gap (Gap [\%]), the average computational time in seconds (Time [s]), and the number of instances solved to global optimality (Solved), ranging between 0 and 3. The computational results show that Gurobi encounters difficulties in closing the gap on most instances. Specifically, only 11 out of 45 instances are solved to optimality. As the number of unlabeled data points increases, the instances become more challenging, and the optimality gap becomes significantly larger after one hour of computation. On the other hand, our solver consistently outperforms Gurobi while solving 43 out of 45 instances and producing an average gap smaller than $1\%$ on the two remaining instances of $\texttt{connectionist}$.

\begin{table}[!ht]
\footnotesize
\caption{Average gap and computational time for instances with $n\le 300$ solved by Gurobi and our solver SDP-S3VM. For each instance, the gap and time are averaged over three different seeds with an increasing number of labeled data points ($p=10\%, 20\%, 30\%$). We also report in column \enquote{Solved} the number of instances solved for a given dataset and the value of $p$.}
\label{tab:gurobi_avg}
\begin{tabular}{lcc|ccc|ccc}
\toprule
& & & \multicolumn{3}{c|}{Gurobi} & \multicolumn{3}{c}{SDP-S3VM}\\
Instance	&	$l$	&	$n-l$	& Gap [\%]	& Time [s] & Solved & Gap [\%]	& Time [s] & Solved\\
\midrule
art100 	&	10	&	90	&	7.37	&	3600	&	0	&	0.10	&	26.11	&	3	\\
art100	&	20	&	80	&	3.09	&	2467.43	&	1	&	0.10	&	13.28	&	3	\\
art100 	&	30	&	70	&	3.27	&	2401.26	&	1	&	0.10	&	37.48	&	3	\\
art150 	&	14	&	136	&	8.44	&	3600	&	0	&	0.10	&	61.05	&	3	\\
art150	&	29	&	121	&	2.72	&	1450.20	&	2	&	0.10	&	1.89	&	3	\\
art150 	&	44	&	106	&	2.52	&	2629.13	&	1	&	0.10	&	2.44	&	3	\\
connectionist 	&	20	&	188	&	16.83	&	3600	&	0	&	0.88	&	2587.20	&	1	\\
connectionist 	&	62	&	146	&	12.87	&	3600	&	0	&	0.10	&	248.07	&	3	\\
connectionist	&	41	&	167	&	10.71	&	3600	&	0	&	0.10	&	104.95	&	3	\\
heart 	&	27	&	243	&	14.00	&	3600	&	0	&	0.10	&	38.89	&	3	\\
heart 	&	54	&	216	&	10.21	&	3600	&	0	&	0.10	&	64.45	&	3	\\
heart 	&	81	&	189	&	10.58	&	3600	&	0	&	0.10	&	16.22	&	3	\\
2moons 	&	30	&	270	&	6.52	&	3600	&	0	&	0.10	&	16.22	&	3	\\
2moons 	&	60	&	140	&	0.03	&	1023.52	&	3	&	0.10	&	22.07	&	3	\\
2moons 	&	90	&	210	&	0.05	&	1.95	&	3	&	0.10	&	21.50	&	3	\\
\bottomrule
\end{tabular} 
\end{table}

In Tables~\ref{tab:small_full} and~\ref{tab:large_full}, we report the detailed performance of our solver on the small and large instances, respectively. Specifically, we show the performance of our B\&B approach, including the number of nodes (Nodes), the final gap (Gap [\%]), and computational time (Time [s]). We also evaluate the quality of the produced solution in terms of accuracy. By accuracy, we mean the percentage of correctly labeled data points among the unlabeled ones. Indeed, we report the number of labeled ($l$) and unlabeled data points ($n-l$), the kernel, the penalty parameter for the labeled data points $C_l$ ($C_u = 0.2 \cdot (l/(n-l)) \cdot C_l$), the percentage gap (Gap [\%], the number of nodes (Nodes), and the computational time in seconds. Furthermore, we report the accuracy on all unlabeled data points produced by our solution and the accuracy of the baseline, which is the accuracy on unlabeled data points obtained by a supervised SVM built using only labeled data points as a training set. {We do not report the time required by the supervised SVM, as only one convex QP needs to be solved in this case, and the solution time is negligible even compared to the execution of an S3VM lower bound procedure.}
We set a time limit of 3600 seconds (one hour) on the small instances ($n\le 300$) and a time limit of 10800 seconds (three hours) on the larger ones.


The computational results show that we solve 88 instances out of 99 to the required precision (gap less than 0.1\%). On the remaining 11 instances, we hit the time limit. Overall, our method performs very well, with a low number of average nodes and closing many instances at the root node (39 out of 99 instances), confirming the high quality of our SDP bound. For the two instances of \texttt{connectionist} that we do not solve, we achieve a gap of 1.25\% and 1.29\% after one hour of computation. Among the large-scale instances, we do not solve nine instances, but we consistently achieve a gap of less than 2\% except in four cases. These four instances include two from the \texttt{arrhythmia} dataset and two from the \texttt{musk} dataset. However, we can see that in all four instances, the cross-validation leads to a wrong choice of the kernel. Indeed, the best choice of the kernel was the RBF (chosen on all the other instances derived by the two datasets \texttt{arrhythmia} and \texttt{musk}), whereas on all four instances, the linear kernel was chosen. This can be explained by the small number of labeled data points used for the cross-validation, which may lead to wrong choices. We will see in the following subsection that when choosing the RBF kernel, the gap decreases to values less than 1\%, and the accuracy increases.

These examples show that solving an S3VM instance to optimality is influenced by various factors, such as the number of unlabeled data points, the chosen hyperparameters, and the kernel selection due to the potential violation of the clustering assumption for certain kernel choices~\cite{chapelle2005semi}. Regardless of the challenges posed by solving a particular instance from a mathematical optimization perspective, the accuracy of the final labeling produced by the global minimum is influenced by the aforementioned choices. 
Further discussion on this aspect will be provided in the next section.

{\footnotesize
\begin{longtable}{lcc|cc|cccc|c}
\caption{In this table, we show the performance of our solver on small instances. We report the instance name, the number of labeled data points $l$, the number of unlabeled data points $n-l$, the kernel, the penalty parameter $C_l$, the percentage gap (Gap [\%]), the number of nodes (Nodes), the computational time in seconds (Time [s]), the accuracy of the produced solution on the unlabeled data points (Acc. [\%]), the accuracy of a supervised SVM built on the labeled data points and tested on the unlabeled ones (Base [\%]). We highlight in boldface the labeling that achieves the highest accuracy on the unlabeled data points.}
\label{tab:small_full}\\
\toprule
Instance	&	$l$	&	$n-l$	& Kernel & $C_l$ & Gap [\%] & Nodes & Time [s] & Acc. [\%] & Base [\%]\\  
\midrule
art100 	&	10	&	90	&	 linear 	&	0.1	&	0.1	&	3	&	11.39	&	\textbf{94.44}	&	\textbf{94.44}	\\
art100 	&	10	&	90	&	 linear 	&	0.1	&	0.1	&	11	&	13.47	&	95.56	&	\textbf{96.67}	\\
art100 	&	10	&	90	&	 RBF 	&	0.1	&	0.1	&	185	&	53.48	&	\textbf{91.11}	&	87.78	\\
art100 	&	20	&	80	&	 linear 	&	0.1	&	0.1	&	1	&	10.65	&	\textbf{96.25}	&	93.75	\\
art100 	&	20	&	80	&	 linear 	&	0.1	&	0.1	&	1	&	10.68	&	\textbf{97.50}	&	93.75	\\
art100 	&	20	&	80	&	 RBF 	&	0.1	&	0.1	&	27	&	18.52	&	\textbf{90.00}	&	87.50	\\
art100 	&	30	&	70	&	 linear 	&	0.1	&	0.1	&	1	&	10.77	&	\textbf{95.71}	&	92.86	\\
art100 	&	30	&	70	&	 RBF 	&	0.1	&	0.1	&	3	&	11.00	&	\textbf{94.29}	&	90.00	\\
art100 	&	30	&	70	&	 RBF 	&	0.1	&	0.1	&	53	&	26.54	&	\textbf{94.29}	&	87.14	\\\midrule
art150 	&	14	&	136	&	 RBF 	&	1.58	&	0.1	&	115	&	101.13	&	84.56	&	\textbf{87.50}	\\
art150 	&	14	&	136	&	 linear 	&	0.1	&	0.1	&	1	&	2.40	&	\textbf{91.91}	&	90.44	\\
art150 	&	14	&	136	&	 RBF 	&	0.1	&	0.1	&	11	&	8.92	&	\textbf{86.76}	&	83.82	\\
art150 	&	29	&	121	&	 RBF 	&	3.16	&	0.1	&	1	&	2.88	&	\textbf{89.26}	&	88.43	\\
art150 	&	29	&	121	&	 RBF 	&	0.1	&	0.1	&	219	&	168.61	&	\textbf{90.91}	&	87.60	\\
art150 	&	29	&	121	&	 RBF 	&	3.98	&	0.1	&	9	&	11.67	&	85.12	&	\textbf{86.78}	\\
art150 	&	44	&	106	&	 linear 	&	0.1	&	0.1	&	1	&	1.72	&	\textbf{88.68}	&	\textbf{88.68}	\\
art150 	&	44	&	106	&	 linear 	&	0.1	&	0.1	&	1	&	1.56	&	\textbf{89.62}	&	\textbf{89.62}	\\
art150 	&	44	&	106	&	 linear 	&	0.2	&	0.1	&	1	&	2.40	&	\textbf{85.85}	&	83.02	\\\midrule
connectionist 	&	20	&	188	&	 linear 	&	0.10	&	1.25	&	1448	&	3600	&	59.04	&	\textbf{60.11}	\\
connectionist 	&	20	&	188	&	 linear 	&	0.10	&	0.1	&	319	&	559.59	&	65.43	&	\textbf{66.49}	\\
connectionist 	&	20	&	188	&	 RBF 	&	1.26	&	1.29	&	1140	&	3600	&	69.15	&	\textbf{69.68}	\\
connectionist 	&	41	&	167	&	 RBF 	&	0.50	&	0.1	&	3	&	16.13	&	71.86	&	\textbf{75.45}	\\
connectionist 	&	41	&	167	&	 RBF 	&	0.10	&	0.1	&	263	&	518.69	&	59.88	&	\textbf{62.87}	\\
connectionist 	&	41	&	167	&	 linear 	&	0.10	&	0.1	&	129	&	209.38	&	\textbf{74.25}	&	73.65	\\
connectionist 	&	62	&	146	&	 RBF 	&	1.26	&	0.1	&	41	&	108.72	&	86.3	&	\textbf{86.99}	\\
connectionist 	&	62	&	146	&	 RBF 	&	0.63	&	0.1	&	77	&	188.95	&	71.23	&	\textbf{71.92}	\\
connectionist 	&	62	&	146	&	 RBF 	&	0.50	&	0.1	&	3	&	17.19	&	\textbf{86.30}	&	80.82	\\\midrule
heart 	&	27	&	243	&	 RBF 	&	0.10	&	0.1	&	3	&	23.12	&	\textbf{81.89}	&	80.25	\\
heart 	&	27	&	243	&	 RBF 	&	0.79	&	0.1	&	1	&	20.08	&	\textbf{80.25}	&	79.42	\\
heart 	&	27	&	243	&	 linear 	&	0.16	&	0.1	&	21	&	73.47	&	69.96	&	\textbf{70.37}	\\
heart 	&	54	&	216	&	 RBF 	&	0.10	&	0.1	&	1	&	11.49	&	\textbf{83.33}	&	82.41	\\
heart 	&	54	&	216	&	 RBF 	&	0.40	&	0.1	&	1	&	20.79	&	\textbf{84.72}	&	82.87	\\
heart 	&	54	&	216	&	 linear 	&	0.10	&	0.1	&	47	&	161.07	&	\textbf{76.85}	&	75.93	\\
heart 	&	81	&	189	&	 RBF 	&	0.10	&	0.1	&	1	&	11.75	&	\textbf{85.19}	&	\textbf{85.19}	\\
heart 	&	81	&	189	&	 RBF 	&	0.10	&	0.1	&	1	&	18.80	&	\textbf{85.19}	&	\textbf{85.19}	\\
heart 	&	81	&	189	&	 RBF 	&	0.10	&	0.1	&	1	&	18.12	&	\textbf{84.13}	&	83.07	\\\midrule
2moons 	&	30	&	270	&	 RBF 	&	2.51	&	0.1	&	1	&	33.71	&	\textbf{97.78}	&	97.04	\\
2moons 	&	30	&	270	&	 RBF 	&	0.10	&	0.1	&	1	&	29.25	&	\textbf{87.04}	&	\textbf{87.04}	\\
2moons 	&	30	&	270	&	 RBF 	&	0.50	&	0.1	&	1	&	30.10	&	91.85	&	\textbf{92.22}	\\
2moons 	&	60	&	240	&	 RBF 	&	3.98	&	0.1	&	1	&	21.77	&	\textbf{100.00}	&	\textbf{100.00}	\\
2moons 	&	60	&	240	&	 RBF 	&	3.98	&	0.1	&	1	&	21.62	&	\textbf{100.00}	&	\textbf{100.00}	\\
2moons 	&	60	&	240	&	 RBF 	&	1.26	&	0.1	&	1	&	22.83	&	\textbf{97.92}	&	\textbf{97.92}	\\
2moons 	&	90	&	210	&	 RBF 	&	3.16	&	0.1	&	1	&	21.38	&	\textbf{100.00}	&	\textbf{100.00}	\\
2moons 	&	90	&	210	&	 RBF 	&	2.51	&	0.1	&	1	&	21.15	&	\textbf{100.00}	&	\textbf{100.00}	\\
2moons 	&	90	&	210	&	 RBF 	&	1.26	&	0.1	&	1	&	21.97	&	\textbf{97.62}	&	\textbf{97.62}	\\
\bottomrule
\end{longtable}}


{\footnotesize
\begin{longtable}{lcc|cc|cccc|c}
\caption{In this table, we show the performance of our solver on large instances. We report the instance name, the number of labeled data points $l$, the number of unlabeled data points $n-l$, the kernel, the penalty parameter $C_l$, the percentage gap (Gap [\%]), the number of nodes (Nodes), the computational time in seconds (Time [s]), the accuracy of the produced solution on the unlabeled data points (Acc. [\%]), the accuracy of a supervised SVM built on the labeled data points and tested on the unlabeled ones (Base [\%]). We highlight in boldface the labeling that achieves the highest accuracy on the unlabeled data points.}
\label{tab:large_full}\\
\toprule
Instance	&	$l$	&	$n-l$	& Kernel & $C_l$ & Gap [\%] & Nodes & Time [s] & Acc. [\%] & Base [\%]\\  
\midrule
ionosphere 	&	34	&	317	&	 RBF 	&	0.79	&	0.1	&	59	&	529.48	&	\textbf{91.80}	&	81.70	\\
ionosphere 	&	34	&	317	&	 linear 	&	0.40	&	0.1	&	73	&	492.74	&	88.33	&	\textbf{88.96}	\\
ionosphere 	&	34	&	317	&	 linear 	&	0.13	&	0.1	&	3	&	50.05	&	\textbf{87.38}	&	84.23	\\
ionosphere 	&	70	&	281	&	 RBF 	&	2.51	&	0.1	&	3	&	107.89	&	\textbf{90.75}	&	90.04	\\
ionosphere 	&	70	&	281	&	 RBF 	&	10.00	&	0.1	&	7	&	181.61	&	\textbf{91.46}	&	85.05	\\
ionosphere 	&	70	&	281	&	 linear 	&	0.16	&	0.1	&	1	&	43.55	&	\textbf{88.61}	&	87.54	\\
ionosphere 	&	104	&	247	&	 RBF 	&	2.51	&	0.1	&	5	&	128.45	&	90.28	&	\textbf{90.69}	\\
ionosphere 	&	104	&	247	&	 linear 	&	0.32	&	0.1	&	37	&	221.45	&	\textbf{88.26}	&	86.64	\\
ionosphere 	&	104	&	247	&	 linear 	&	0.16	&	0.1	&	1	&	56.87	&	89.47	&	\textbf{90.69}	\\
\midrule
PowerCons 	&	36	&	324	&	 RBF 	&	1.26	&	0.1	&	11	&	139.97	&	\textbf{95.06}	&	93.83	\\
PowerCons 	&	36	&	324	&	 RBF 	&	1.58	&	0.1	&	1	&	45.20	&	95.37	&	\textbf{96.30}	\\
PowerCons 	&	36	&	324	&	 linear 	&	0.10	&	0.1	&	53	&	534.19	&	\textbf{97.84}	&	94.44	\\
PowerCons	&	72	&	288	&	 RBF 	&	0.16	&	0.1	&	11	&	101.41	&	\textbf{95.83}	&	94.79	\\
PowerCons 	&	72	&	288	&	 RBF 	&	1.58	&	0.1	&	1	&	30.79	&	96.53	&	\textbf{97.57}	\\
PowerCons 	&	72	&	288	&	 linear 	&	0.10	&	0.1	&	55	&	375.76	&	\textbf{98.61}	&	97.57	\\
PowerCons 	&	108	&	252	&	 linear 	&	0.10	&	0.1	&	11	&	129.53	&	\textbf{98.81}	&	\textbf{98.81}	\\
PowerCons 	&	108	&	252	&	 linear 	&	0.10	&	0.1	&	15	&	109.83	&	98.81	&	\textbf{99.21}	\\
PowerCons 	&	108	&	252	&	 linear 	&	0.10	&	0.1	&	17	&	169.85	&	98.41	&	\textbf{99.21}	\\
\midrule
GunPoint 	&	44	&	407	&	 RBF 	&	10.00	&	0.1	&	1	&	94.24	&	87.71	&	\textbf{88.21}	\\
GunPoint 	&	44	&	407	&	 RBF 	&	0.79	&	0.1	&	207	&	2925.41	&	84.03	&	\textbf{87.96}	\\
GunPoint 	&	44	&	407	&	 RBF 	&	0.79	&	0.1	&	7	&	318.37	&	\textbf{88.94}	&	86.98	\\
GunPoint 	&	89	&	362	&	 RBF 	&	3.98	&	0.1	&	1	&	130.63	&	95.58	&	\textbf{96.13}	\\
GunPoint 	&	89	&	362	&	 RBF 	&	7.94	&	0.1	&	1	&	138.84	&	\textbf{95.03}	&	94.75	\\
GunPoint 	&	89	&	362	&	 RBF 	&	3.16	&	0.1	&	1	&	98.54	&\textbf{93.09}	&	92.54	\\
GunPoint 	&	134	&	317	&	 RBF 	&	3.16	&	0.1	&	1	&	88.93	&	95.58	&	\textbf{96.53}	\\
GunPoint 	&	134	&	317	&	 RBF 	&	5.01	&	0.1	&	1	&	60.82	&	94.01	&	\textbf{94.32}	\\
GunPoint 	&	134	&	317	&	 RBF 	&	10.00	&	0.1	&	23	&	484.11	&	91.80	&	\textbf{92.43}	\\
\midrule
arrhythmia 	&	44	&	408	&	 RBF 	&	0.63	&	0.1	&	67	&	2463.22	&	\textbf{72.30}	&	65.93	\\
arrhythmia 	&	44	&	408	&	 RBF 	&	0.20	&	0.1	&	1	&	88.19	&	\textbf{72.06}	&	63.73	\\
arrhythmia 	&	44	&	408	&	 linear 	&	0.10	&	9.5	&	563	&	10800	&	69.12	&	\textbf{69.61}	\\
arrhythmia 	&	90	&	362	&	 RBF 	&	1.58	&	0.37	&	555	&	10800	&	\textbf{74.31}	&	70.72	\\
arrhythmia 	&	90	&	362	&	 linear 	&	0.32	&	10.91	&	532	&	10800	&	\textbf{67.40}	&	67.13	\\
arrhythmia 	&	90	&	362	&	 RBF 	&	1.26	&	0.18	&	371	&	10800	&	\textbf{70.99}	&	\textbf{70.99}	\\
arrhythmia 	&	135	&	317	&	 RBF 	&	2.00	&	0.56	&	632	&	10800	&	\textbf{75.08}	&	74.45	\\
arrhythmia 	&	135	&	317	&	 RBF 	&	0.25	&	0.1	&	79	&	2002.53	&	\textbf{74.45}	&	72.87	\\
arrhythmia 	&	135	&	317	&	 RBF 	&	3.98	&	1.88	&	415	&	10800	&	72.24	&	\textbf{74.13}	\\
\midrule
musk 	&	46	&	430	&	 linear 	&	0.79	&	30.75	&	323	&	10800	&	\textbf{70.23}	&	69.30	\\
musk 	&	46	&	430	&	 RBF 	&	0.25	&	0.1	&	1	&	235.84	&	68.37	&	\textbf{72.33}	\\
musk 	&	46	&	430	&	 linear 	&	0.10	&	9.11	&	503	&	10800	&	\textbf{67.44}	&	66.51	\\
musk 	&	94	&	382	&	 RBF 	&	1.00	&	0.16	&	455	&	10800	&	80.63	&	\textbf{81.68}	\\
musk 	&	94	&	382	&	 RBF 	&	0.50	&	0.1	&	3	&	161.91	&	\textbf{81.94}	&	81.68	\\
musk 	&	94	&	382	&	 RBF 	&	0.25	&	0.14	&	421	&	10800	&	\textbf{78.53}	&	77.75	\\
musk 	&	142	&	334	&	 RBF 	&	2.00	&	0.1	&	265	&	4173.31	&	\textbf{88.02}	&	85.03	\\
musk 	&	142	&	334	&	 RBF 	&	6.31	&	1.16	&	583	&	10800	&	\textbf{88.62}	&	87.13	\\
musk 	&	142	&	334	&	 RBF 	&	0.79	&	0.1	&	645	&	7914.07	&	\textbf{84.43}	&	83.83	\\
\midrule
wdbc 	&	56	&	513	&	 linear 	&	0.25	&	0.1	&	69	&	1398.81	&	\textbf{96.30}	&	95.32	\\
wdbc 	&	56	&	513	&	 linear 	&	0.10	&	0.1	&	3	&	210.55	&	94.74	&	\textbf{94.93}	\\
wdbc 	&	56	&	513	&	 linear 	&	0.10	&	0.1	&	1	&	193.61	&	\textbf{96.88}	&	\textbf{96.88}	\\
wdbc 	&	113	&	456	&	 RBF 	&	1.26	&	0.1	&	1	&	281.77	&	\textbf{96.49}	&	96.05	\\
wdbc 	&	113	&	456	&	 linear 	&	0.10	&	0.1	&	15	&	438.33	&	94.96	&	\textbf{95.83}	\\
wdbc 	&	113	&	456	&	 RBF 	&	5.01	&	0.1	&	11	&	381.48	&	\textbf{96.05}	&	\textbf{96.05}	\\
wdbc 	&	170	&	399	&	 RBF 	&	3.98	&	0.1	&	11	&	543.02	&	96.74	&	\textbf{97.49}	\\
wdbc 	&	170	&	399	&	 RBF 	&	0.32	&	0.1	&	1	&	279.65	&	94.49	&	\textbf{94.99}	\\
wdbc 	&	170	&	399	&	 RBF 	&	3.16	&	0.1	&	3	&	163.32	&	\textbf{96.99}	&	96.49	\\
\bottomrule
\end{longtable}}

\subsection{Machine learning perspective}\label{sec:ml}
In this section, we discuss the obtained results from the machine learning point of view. Our method solves S3VM classification problems to global optimality, thus avoiding the issues related to the presence of local minima. However, this may not be enough to obtain good classification performance. Semi-supervised learning primarily aims to exploit unlabeled data to construct better classification algorithms. As it turns out, this is not always easy or even possible. As mentioned earlier, unlabeled data is only useful if it carries information for label prediction that is not contained in the labeled data alone or cannot be easily extracted from it. Moreover, we also remark that the clustering assumption is a crucial and necessary condition for semi-supervised learning to succeed. In other words, if the data points (both unlabeled and labeled) cannot be meaningfully clustered, it is impossible for a semi-supervised learning method to improve on a supervised learning one.

However, assuming that the problem is solved to global optimality, some issues can be fixed by either considering the balancing constraint or changing the hyperparameters. To begin, we illustrate the advantage of including the balancing constraint with an example, especially when the labeled data points lack sufficient information. Indeed, depending on the quality of labeled data points, fluctuation in accuracy can be experienced when minimizing the objective function of Problem~\eqref{eq:orig_prob}, and the global minimum may perform poorly in terms of accuracy {with respect to} local minima with higher objective value. This can result from the inclination to cluster as many closely located points together as possible. Therefore, if the labeled data points are not informative enough, this may lead to collocating many unlabeled points in the same class, leading to poor accuracy.

\begin{figure}[!ht]
\begin{subfigure}{.5\linewidth}
\begin{tikzpicture}
\begin{axis}[
    title={},
      width=150pt,
      height=140pt,
      xtick=\empty,
      ytick=\empty,
      legend pos=south east,
      legend style={nodes={scale=0.5}}
]
\addplot[scatter/classes={1={blue}, -1={red}}, scatter, mark=*, only marks, scatter src=explicit symbolic, mark size=1.5pt]
table[x=x,y=y,meta=label]{art_gt.txt};
\legend{$+1$, $-1$}
\end{axis}
\end{tikzpicture}
\centering
\caption{Ground-truth classification} \label{fig:1a}
\end{subfigure}%
\begin{subfigure}{.5\linewidth}
\begin{tikzpicture}
\begin{axis}[
    title={},
      width=150pt,
      height=140pt,
      xtick=\empty,
      ytick=\empty,
      legend pos=south east,
      legend style={nodes={scale=0.5}}
]
\addplot[scatter/classes={1={mark=*, blue}, -1={mark=*, red}, 0={mark=o, black!40}}, scatter, only marks, scatter src=explicit symbolic, mark size=1.5pt]
table[x=x,y=y,meta=used]{art_gt.txt};
\legend{Label $+1$, Label $-1$, Unlabeled}
\end{axis}
\end{tikzpicture}
\centering
\caption{Labeled and unlabeled data points} \label{fig:1b}
\end{subfigure}%

\begin{subfigure}{.5\linewidth}
\begin{tikzpicture}
\begin{axis}[
    title={},
      width=150pt,
      height=140pt,
      xtick=\empty,
      ytick=\empty,
      legend pos=south east,
      legend style={nodes={scale=0.5}}
]
\addplot[scatter/classes={1={mark=+, blue}, -1={mark=+, red}}, scatter, only marks, scatter src=explicit symbolic, mark size=1.5pt]
table[x=x,y=y,meta=global]{art_gt.txt};
\legend{Label $+1$, Label $-1$}
\end{axis}
\end{tikzpicture}
\centering
\caption{Optimal S3VM solution} \label{fig:1c}
\end{subfigure}%
\begin{subfigure}{.5\linewidth}
\begin{tikzpicture}
\begin{axis}[
    title={},
      width=150pt,
      height=140pt,
      xtick=\empty,
      ytick=\empty,
      legend pos=south east,
      legend style={nodes={scale=0.5}}
]
\addplot[scatter/classes={1={mark=+, blue}, -1={mark=+, red}}, scatter, only marks, scatter src=explicit symbolic, mark size=1.5pt]
table[x=x,y=y,meta=balance]{art_gt.txt};
\legend{Label $+1$, Label $-1$}
\end{axis}
\end{tikzpicture}
\centering
\caption{Optimal S3VM solution with balancing} \label{fig:1d}
\end{subfigure}%
\end{figure}

As an example, consider Figures~\ref{fig:1a}--\ref{fig:1d}. We have a small dataset of $50$ data points in the plane, and the ground truth is reported in Figure~\ref{fig:1a}. Assume we have 10\% of labeled data points that are the ones shown in Figure~\ref{fig:1b}. Then, if we solve Problem~\eqref{eq:orig_prob} to global optimality, we get the labeling of Figure~\ref{fig:1c}, which achieves an accuracy of 52\% on the unlabeled data points. However, if we introduce the balancing constraint, the global minimum leads to the labeling in Figure~\ref{fig:1d} that perfectly recovers the ground truth (100\% accuracy).
{The example in Figure~\ref{fig:1a} might be seen as an extreme case, but our experience shows that omitting the balancing constraint~\eqref{eq:our_balancing} deteriorates the performance, even on real-world data. To demonstrate this behavior, we select two instances with less balanced class distribution, \texttt{ionosphere} and \texttt{heart}, and compare the results with and without the balancing constraint. Table~\ref{tab:bal} shows the accuracy on the unlabeled data points without the balancing constraint (column No Balancing [\%]) and with it (column Balancing [\%]). The hyperparameters used are consistent with those in Section~\ref{sec:bbres}. We also include the Baseline column from Tables~\ref{tab:small_full} and~\ref{tab:large_full} for comparison. The results indicate that the balancing constraint significantly improves the accuracy of the globally optimal solution, outperforming the S3VM without this constraint in most cases. Specifically, on the \texttt{ionosphere} dataset, we observe an average improvement of 4.16\%, with individual instances showing impressive accuracy gains (up to 17.98\%). Additionally, for the \texttt{heart} dataset, the average accuracy without the balancing constraint falls below the baseline, whereas including the constraint raises it above the baseline.}

\begin{table}[!ht]
\footnotesize
\centering
\caption{{We report the instance name, the number of labeled data points $l$, the number of unlabeled data points $n-l$, the accuracy of the S3VM solution without the balancing constraint (No balancing [\%]), the accuracy of the S3VM solution with the balancing constraint (Balancing [\%]), the accuracy of a supervised SVM built on the labeled data points and tested on the unlabeled ones (Baseline [\%]). We highlight in boldface the labeling that achieves the highest accuracy on the unlabeled data points. Additionally, we report the average accuracy for each method.}}
\label{tab:bal}
\begin{tabular}{lcc|ccc}
\toprule
Instance	&	$l$	&	$n-l$	& No balancing [\%]	& Balancing [\%] & Baseline [\%]\\
\midrule
ionosphere	&	34	&	317	&	73.82	&	\textbf{91.80}	&	81.70	\\
ionosphere	&	34	&	317	&	\textbf{89.91}	&	88.33	&	88.96	\\
ionosphere	&	34	&	317	&	85.17	&	\textbf{87.38}	&	84.23	\\
ionosphere	&	70	&	281	&	90.04	&	\textbf{90.75}	&	90.04	\\
ionosphere	&	70	&	281	&	85.05	&	\textbf{91.46}	&	85.05	\\
ionosphere	&	70	&	281	&	87.19	&	\textbf{88.61}	&	87.54	\\
ionosphere	&	104	&	247	&	\textbf{90.69}	&	90.28	&	\textbf{90.69}	\\
ionosphere	&	104	&	247	&	85.43	&	\textbf{88.26}	&	86.64	\\
ionosphere	&	104	&	247	&	89.88	&	89.47	&	\textbf{90.69}	\\
\midrule
Average	&		&		&	85.23	&	\textbf{89.36}	&	86.31	\\
\midrule
heart	&	27	&	243	&	77.37	&	\textbf{81.89}	&	80.25	\\
heart	&	27	&	243	&	\textbf{80.66}	&	80.25	&	79.42	\\
heart	&	27	&	243	&	\textbf{71.19}	&	69.96	&	70.37	\\
heart	&	54	&	216	&	81.94	&	\textbf{83.33}	&	82.41	\\
heart	&	54	&	216	&	81.94	&	\textbf{84.72}	&	82.87	\\
heart	&	54	&	216	&	76.39	&	\textbf{76.85}	&	75.93	\\
heart	&	81	&	189	&	\textbf{85.19}	&	\textbf{85.19}	&	\textbf{85.19}	\\
heart	&	81	&	189	&	\textbf{85.71}	&	85.19	&	85.19	\\
heart	&	81	&	189	&	82.54	&	\textbf{84.13}	&	83.07	\\
\midrule
Average	&		&		&	80.33	&	\textbf{81.28}	&	80.52	\\
\bottomrule
\end{tabular} 
\end{table}

Even when including the balancing constraint, the global minimum may not improve on the (supervised) baseline. {This may happen for several reasons: an incorrect choice of hyperparameters, the clustering assumption not being verified, or the unlabeled data points not providing additional information compared to the labeled ones.} Failure of the clustering assumption is our conjecture on the \texttt{connectionist} dataset, where we also tested different sets of hyperparameters, managing to improve the accuracy of the baseline only very rarely. Different is the case of the four instances discussed in the previous section, namely two instances of \texttt{arrhythmia} with 10\% and 20\% of labeled data points, and two instances of \texttt{musk} with 10\% of labeled data points. In all four instances, the linear kernel was chosen by the cross-validation procedure. We argue that the reason for the poor results was the wrong kernel choice, which impacts the lower bound quality. Indeed, we solve these four instances by imposing the RBF kernel and setting $C_l=1$ (which is a standard value). The results are reported in Table~\ref{tab:special}, where we also include the lines from Table~\ref{tab:large_full} to facilitate the comparison. It turns out that the gaps are much smaller even though we cannot close them in three hours. These results confirm the theoretical result that the quality of our bounding routine is related to the kernel choice. Note also that, apart from the first instance, the final accuracy on the unlabeled data points is higher in this new setting, both for us and for the baseline, confirming the better quality of the kernel.

\begin{table}[!ht]
\footnotesize
\caption{In this table, we show the performance of our solver on four problematic instances by changing the kernel. We report the instance name, the number of labeled data points $l$, the number of unlabeled data points $n-l$, the kernel, the penalty parameter $C_l$, the percentage gap (Gap [\%]), the number of nodes (Nodes), the computational time in seconds (Time [s]), the accuracy of the produced solution on the unlabeled data points (Acc. [\%]), the accuracy of a supervised SVM built on the labeled data points and tested on the unlabeled ones (Base [\%]). We highlight in boldface the labeling that achieves the highest accuracy on the unlabeled data points.}\label{tab:special}
\begin{tabular}{lcc|cc|cccc|c}
\toprule
Instance	&	$l$	&	$n-l$	& Kernel & $C_l$ & Gap [\%] & Nodes & Time [s] & Acc. [\%] & Base [\%]\\  
\midrule
arrhythmia 	&	44	&	408	&	 linear 	&	0.10	&	9.50	&	563	&	10800	&	69.12	&	\textbf{69.61}	\\
arrhythmia 	&	44	&	408	&	 RBF 	&	1	&	0.79	&	453	&	10800	&	\textbf{68.87}	&	 66.67	\\
arrhythmia 	&	90	&	362	&	 linear 	&	0.32	&	10.91	&	532	&	10800	&	\textbf{67.40}	&	67.13	\\
arrhythmia 	&	90	&	362	&	 RBF 	&	1	&	0.29	&	435	&	10800	&	\textbf{70.99}	&	 69.89	\\
\midrule
musk 	&	46	&	430	&	 linear 	&	0.79	&	30.75	&	323	&	10800	&	\textbf{70.23}	&	69.3	\\
musk 	&	46	&	430	&	 RBF	&	1	&	0.51	&	277	&	10800	&	\textbf{76.51}	&	 75.81	\\
musk 	&	46	&	430	&	 linear 	&	0.10	&	9.11	&	503	&	10800	&	\textbf{67.44}	&	66.51	\\
musk 	&	46	&	430	&	 RBF 	&	1	&	0.49	&	382	&	10800	&	\textbf{76.28}	&	 70.47	\\
\bottomrule
\end{tabular}
\end{table}
To conclude the discussion on the achieved accuracy results, {we calculate in Table~\ref{tab:avgacc} the average percentage accuracy over the three seeds reported by the S3VM (column Acc. [\%]) and the baseline accuracy (column Base [\%]) for all instances except {\tt connectionist}. Furthermore, in the column ``Impr [\%]", we report the difference between ``Acc. [\%]" and ``Base [\%]", measuring the improvement (if positive) in the accuracy of the S3VM over the baseline supervised SVM. The average results confirm that the S3VM generally improves the average accuracy over the baseline supervised SVM. Increasing the number of labeled data points also enhances accuracy for both the S3VM and the baseline SVM. For some datasets, the S3VM demonstrates more significant improvements with fewer labeled data points, affirming that semi-supervised learning is particularly advantageous when labeled data is scarce. However, this increase in accuracy comes with the trade-off of significantly higher computational times, as we are solving a challenging global optimization problem.}
\begin{table}[!ht]
\caption{{In this table, we show the average accuracy across the three seeds. We report the instance name, the number of labeled data points $l$, the number of unlabeled data points $n - l$, the average S3VM accuracy (Acc. [\%]), and the average supervised SVM accuracy (Base [\%]). Additionally, we report in the column ``Impr [\%]" the difference between the columns ``Acc. [\%]" and ``Base [\%]".}}\label{tab:avgacc}
\begin{tabular}{lcc|cc|c}
\toprule
Instance	&	$l$	&	$n-l$	& Acc. [\%]  &  Base [\%] & Impr [\%]\\  
\midrule
art100  	&	10	&	90	&	93.70	&	92.96	&	0.74	\\
art100  	&	20	&	80	&	94.58	&	91.67	&	2.92	\\
art100  	&	30	&	70	&	94.76	&	90.00	&	4.76	\\
art150  	&	14	&	136	&	87.74	&	87.25	&	0.49	\\
art150  	&	29	&	121	&	88.43	&	87.60	&	0.83	\\
art150  	&	44	&	106	&	88.05	&	87.11	&	0.94	\\
heart   	&	27	&	243	&	77.37	&	76.68	&	0.69	\\
heart   	&	54	&	216	&	81.63	&	80.40	&	1.23	\\
heart   	&	81	&	189	&	84.84	&	84.48	&	0.35	\\
2moons  	&	30	&	270	&	92.22	&	92.10	&	0.12	\\
2moons  	&	60	&	240	&	99.31	&	99.31	&	0.00	\\
2moons  	&	90	&	210	&	99.21	&	99.21	&	0.00	\\
ionosphere      	&	34	&	317	&	89.17	&	84.96	&	4.21	\\
ionosphere      	&	70	&	281	&	90.27	&	87.54	&	2.73	\\
ionosphere      	&	104	&	247	&	89.34	&	89.34	&	0.00	\\
PowerCons       	&	36	&	324	&	96.09	&	94.86	&	1.23	\\
PowerCons       	&	72	&	288	&	96.99	&	96.64	&	0.35	\\
PowerCons       	&	108	&	252	&	98.68	&	99.08	&	-0.40	\\
GunPoint        	&	44	&	407	&	86.89	&	87.72	&	-0.82	\\
GunPoint        	&	89	&	362	&	94.57	&	94.47	&	0.09	\\
GunPoint        	&	134	&	317	&	93.80	&	94.43	&	-0.63	\\
arrhythmia      	&	44	&	408	&	71.08	&	65.44	&	5.63	\\
arrhythmia      	&	90	&	362	&	72.10	&	70.53	&	1.56	\\
arrhythmia      	&	135	&	317	&	73.92	&	73.82	&	0.11	\\
musk    	&	46	&	430	&	73.72	&	72.87	&	0.85	\\
musk    	&	94	&	382	&	80.37	&	80.37	&	0.00	\\
musk    	&	142	&	334	&	87.02	&	85.33	&	1.69	\\
wdbc    	&	56	&	513	&	95.97	&	95.71	&	0.26	\\
wdbc    	&	113	&	456	&	95.83	&	95.98	&	-0.14	\\
wdbc    	&	170	&	399	&	96.07	&	96.32	&	-0.25	\\
\bottomrule
\end{tabular}
\end{table}

Note that if a heuristic algorithm returns a low-quality solution, it is not possible to understand whether the solution is a bad local minimum or whether, for the given instance, the S3VM approach is not viable. On the other hand, we provide an exact method to directly establish the usefulness of the S3VM approach, at least for a given choice of hyperparameters. To the best of our knowledge, no other exact solution methods exist for real-world instances of such magnitude. 

The topic of this paper reinforces the cross-fertilization between mathematical optimization and machine learning, showing that optimization can be beneficial to machine learning (and vice versa) when competencies from both worlds are put together. From the optimization perspective, we have proposed an efficient method for a challenging problem. However, the correct interpretation of the results is only possible with a deep understanding of the semi-supervised learning problem. From the machine learning point of view, without the exact solver, it is impossible to decide whether the semi-supervised approach is suitable for a given instance of the problem.

\section{Conclusions}\label{sec:conclusion}

We proposed an SDP-based branch-and-cut algorithm for solving S3VM instances to global optimality. Similarly to the related literature, we considered the S3VM problem as a non-convex QCQP. Then, we applied optimality-based bound tightening to obtain box constraints. These constraints allowed us to include valid inequalities, strengthening the lower bound. For the lower bounding routine, we designed a cutting-plane algorithm. The resulting SDP relaxation yields bounds significantly stronger than those provided by convex relaxations available in the literature. For the upper bound, instead, we proposed a two-opt local search heuristic that exploits the solution of the SDP relaxation solved at each node. Computational results underscore the efficiency of our algorithm, demonstrating its capability to address instances with ten times more data points than methods currently available in the literature. To the best of our knowledge, no other exact solution method can effectively handle real-world instances of such magnitude. {Clearly, achieving exact solutions comes at the cost of significantly higher computational time compared to traditional supervised SVMs, which is expected due to the non-convex nature of the optimization problem.} This research provides insights from both optimization and machine learning perspectives. Firstly, solving large instances allows an understanding of when S3VMs are effective for a specific problem. Secondly, certified optimal solutions are valuable benchmarking tools for evaluating, enhancing, and developing heuristics and approximation methods. Such methods are fundamental in the machine learning literature. Thirdly, we assessed both theoretically and numerically the challenges that may arise when solving instances to global optimality and achieving high accuracy. The bottleneck in our algorithm lies in the lower bound computation, where we employed an off-the-shelf interior-point solver. For future research, we aim to design a tailored Alternating Direction Method of Multipliers (ADMM) algorithm capable of handling significantly larger sizes (more than a thousand data points) and a substantial number of valid inequalities. This should make the solution of the SDPs faster and more robust, leading to a more efficient branch-and-cut algorithm.

\section*{Acknowledgements}
Veronica Piccialli has been supported by PNRR MUR project PE0000013-FAIR. This research was funded in part by the Austrian Science Fund (FWF) [10.55776/DOC78]. For open access purposes, the author has applied a CC BY public copyright license to any author-accepted manuscript version arising from this submission.

\section*{Statements and Declarations}

\subsection*{Competing Interests}
The authors have no relevant financial or non-financial interests to disclose.

\bibliography{abbr, references}

\end{document}